\numberwithin{equation}{section}
\newcommand{\thistheoremname}{}
\newtheorem*{genericthm}{\thistheoremname}
\newenvironment{namedthm}[1]
{\renewcommand{\thistheoremname}{#1}
	\begin{genericthm}}
	{\end{genericthm}}
\def\hyp{{\hbox{-}}}
\newcommand{\zerohandgrenade}{0^{\: \mid^{\! \! \! \bullet}}}
\newtheorem{theorem}{Theorem}[section]
\newtheorem{lemma}[theorem]{Lemma}
\newtheorem{corollary}[theorem]{Corollary}
\newtheorem{proposition}[theorem]{Proposition}
\newtheorem*{question*}{Question}
\newtheorem*{proviso}{Proviso}
\theoremstyle{definition}
\newtheorem*{claim*}{Claim}
\newtheorem{definition}[theorem]{Definition}
\theoremstyle{remark}
\newtheorem{remark}[theorem]{Remark}
\newenvironment{enumerate-(a)}{\begin{enumerate}[label={\upshape (\alph*)}, leftmargin=2pc]}{\end{enumerate}}
\newenvironment{enumerate-(a)-r}{\begin{enumerate}[label={\upshape (\alph*)}, leftmargin=2pc,resume]}{\end{enumerate}}
\newenvironment{enumerate-(A)}{\begin{enumerate}[label={\upshape (\Alph*)}, leftmargin=2pc]}{\end{enumerate}}
\newenvironment{enumerate-(A)-r}{\begin{enumerate}[label={\upshape (\Alph*)}, leftmargin=2pc,resume]}{\end{enumerate}}
\newenvironment{enumerate-(i)}{\begin{enumerate}[label={\upshape (\roman*)}, leftmargin=2pc]}{\end{enumerate}}
\newenvironment{enumerate-(i)-r}{\begin{enumerate}[label={\upshape (\roman*)}, leftmargin=2pc,resume]}{\end{enumerate}}
\newenvironment{enumerate-(I)}{\begin{enumerate}[label={\upshape (\Roman*)}, leftmargin=2pc]}{\end{enumerate}}
\newenvironment{enumerate-(I)-r}{\begin{enumerate}[label={\upshape (\Roman*)}, leftmargin=2pc,resume]}{\end{enumerate}}
\newenvironment{enumerate-(1)}{\begin{enumerate}[label={\upshape (\arabic*)}, leftmargin=2pc]}{\end{enumerate}}
\newenvironment{enumerate-(1)-r}{\begin{enumerate}[label={\upshape (\arabic*)}, leftmargin=2pc,resume]}{\end{enumerate}}
\begin{document}

	\title{The $\lambda$-$\mathsf{PSP}$ at $\lambda$-$\Pi^1_1$ sets}
	
	\author{Fernando Barrera}
	\address[Fernando Barrera]
	{Institut f\"ur Diskrete Mathematik und Geometrie, Technische Universit\"at Wien, Wiedner Hauptstra{\ss}e 8-10/104, 1040 Vienna, Austria}
	\email[Fernando Barrera]{f.barrera.esteban@gmail.com}
	
	\author{Vincenzo Dimonte}
	\address[Vincenzo Dimonte]
	{GNSAGA INdAM and Universit\`a degli Studi di Udine, Via delle Scienze 206, 33100 Udine, Italy}
	\email[Vincenzo Dimonte]{vincenzo.dimonte@uniud.it}
	
	\author{Sandra Müller}
	\address[Sandra Müller]
	{Institut f\"ur Diskrete Mathematik und Geometrie, Technische Universit\"at Wien, Wiedner Hauptstra{\ss}e 8-10/104, 1040 Vienna, Austria}
	\email[Sandra Müller]{sandra.mueller@tuwien.ac.at}

    \begin{abstract}
	Given a strong limit cardinal $\lambda$ of countable cofinality, we show that if every (boldface) $\lambda\hyp\boldsymbol{\Pi}^1_1$ subset of the generalised Cantor space ${}^{\lambda}2$ has the $\lambda$-$\mathsf{PSP}$, then $0^\dagger$ exists. We show too that if every (lightface) $\lambda\hyp\Pi^1_1$ subset of ${}^\lambda 2$ has the $\lambda\hyp\mathsf{PSP}$, then there is an inner model with a measurable cardinal. The paper, a contribution to the ongoing research on generalised regularity properties in generalised descriptive set theory at singular cardinals of countable cofinality, is aimed at descriptive set theorists, and so it presents its results in as much detail as possible, particularly regarding the inner model-theoretic aspects. In doing so, we intend to provide the community with the tools needed to handle consistency strength arguments at the corresponding levels.
\end{abstract}

    \maketitle

    \section{Introduction}

    In recent years, the study of higher Baire and Cantor spaces has been subject of growing interest, although most of the literature has focused on versions of those spaces in which, roughly speaking, the role of $\omega$ is played by an uncountable regular cardinal. In this paper, however, singular cardinals of countable cofinality take on that role. Mainly motivated by the work of Cramer, Shi and Woodin on the set-theoretic descriptive properties of $L(V_{\lambda+1})$ under the very large cardinal assumption $I0(\lambda)$, properties similar to those that $L(\mathbb{R})$ exhibits under $\mathsf{AD}^{L(\mathbb{R})}$, the systematic study of generalised descriptive set theory ($\mathsf{GDST}$, from now on) at singular cardinals of countable cofinality has been initially carried out by Dimonte and Motto Ros (\cite{DMR}), and has been lately enriched by works of Dimonte, Lücke, Iannella, Poveda and Thei (\cite{dimonte2023descriptive},\cite{dimonte2024baire}). 
    
    As in classical descriptive set theory, one of the major themes in $\mathsf{GDST}$ is the study of the so-called (generalised) regularity properties, to which this paper contributes by providing a consistency strength lower bound to the satisfaction of the $\lambda$-Perfect Set Property at the lowest level in the $\lambda$-analytical and $\lambda$-projective hierarchies at which it consistently fails. Recall that a subset $A$ of a Polish space $X$ has the $\mathsf{PSP}$ if it is either countable or ${}^{\omega}2$ embeds into $A$. Its generalised variant asserts that a subset $A$ of a $\lambda$-Polish space $X$ has the $\lambda\hyp\mathsf{PSP}$ if either it is of size less than or equal to $\lambda$ or ${}^{\lambda}2$ embeds into $A$ as a closed-in-$X$ set. It is a well known result of Souslin that all analytic sets have the $\mathsf{PSP}$. In the generalised setting at singular cardinals of countable cofinality, Dimonte and Motto Ros have proved that all $\lambda$-analytic sets have the $\lambda\hyp\mathsf{PSP}$ as well. 
    
    Extending the $\mathsf{PSP}$ and the $\lambda\hyp\mathsf{PSP}$ further in the projective hierarchy involves, in both cases, large cardinals. In the classical case, the consistency strength is quite low: ``all the coanalytic sets have the $\mathsf{PSP}$" is equiconsistent to the existence of an inaccessible cardinal, as it is ``all the projective sets have the $\mathsf{PSP}$". Even below that, G\"{o}del proved that in $L$ there is a (lightface) $\Pi^1_1$ set without the $\mathsf{PSP}$, therefore ``all the $\Pi^1_1$ sets have the $\mathsf{PSP}$" cannot be a theorem of $\mathsf{ZFC}$, assumed its consistency.
    
    The generalised case is still under study: Dimonte and Motto Ros have proved that in $L$ there is a $\lambda$-coanalytic subset of ${}^{\lambda}2$ without the $\lambda\hyp\mathsf{PSP}$, and that the lower bound of all $\lambda$-coanalytic sets of $\lambda$-reals having the $\lambda\hyp\mathsf{PSP}$ is $0^\sharp$ (\cite[Corollary 7.2.11]{DMR}), a bound that, as the authors noted, can be easily raised up to $a^\sharp$ for every $a\in{}^{\lambda}2$; L\"{u}cke and M\"{u}ller proved that ``all the sets that are definable with a $\lambda\hyp\Sigma^1_2$ formula with parameters in $H_\lambda\cup\{\lambda\}$ have the $\lambda\hyp\mathsf{PSP}$" is equiconsistent to the existence of an inner model with $\omega$-many measurable cardinals (\cite{LM}); as an all-encompassing upper bound, recent work by Dimonte, Poveda and Thei shows that, assuming a $<\theta$-supercompact cardinal $\lambda$ with $\lambda<\theta$ and $\theta$ inaccessible, there exists a model of $\mathsf{ZFC}$ where $\lambda$ is a strong limit cardinal of countable cofinality and every set in $\mathcal{P}({}^{\omega}\lambda)\cap L(V_{\lambda+1})$ has the $\lambda\hyp\mathsf{PSP}$ (\cite[Main Theorem 1]{dimonte2024baire}).

    In this paper, we improve the stated lower bound in two distinct ways. First, we establish the following result, which is slightly stronger than a formulation that only concerns $\lambda$-coanalytic sets: 
    \begin{namedthm}{Corollary \ref{c310}}
	If there is an uncountable strong limit cardinal $\lambda$ of countable cofinality such that all \emph{(lightface)} $\lambda\hyp\Pi^1_1$ subsets of ${}^{\lambda}2$ have the $\lambda\hyp\mathsf{PSP}$, then there is an inner model with a measurable cardinal.
    \end{namedthm}

    Moving further up the large cardinal hierarchy requires us to shift from the class of (lightface) $\lambda\hyp\Pi^1_1$ subsets of ${}^{\lambda}2$ to the broader class of $\lambda$-coanalytic sets. We prove the following:
    
    \begin{namedthm}{Corollary \ref{theorem:main-theorem}}
	If there is an uncountable strong limit cardinal $\lambda$ of countable cofinality such that all $\lambda$-coanalytic subsets of ${}^{\lambda}2$ have the $\lambda\hyp\mathsf{PSP}$, then $0^{\dagger}$ exists.
    \end{namedthm}

    The paper is organised as follows. In Section \ref{section:preliminaries}, we recall the basics of $\mathsf{GDST}$ at singular cardinals of countable cofinality and review all necessary background results. Here, we collect the description of the $\lambda$-projective and $\lambda$-analytical hierarchies together with some results on codes for structures of size $\lambda$. This section also includes a toy example which, although not framed in a descriptive set-theoretic context, illustrates how core models can be used to obtain consistency strength lower bounds. Sections \ref{section:dodd-jensen} and \ref{section:0-dagger} deal with inner model theoretic notions at the level of the existence of an inner model with a measurable cardinal and $0^\dagger$. As these topics may be unfamiliar to part of our audience, we try to provide as much detail as possible. In Section \ref{section:dodd-jensen}, we study codes for Dodd-Jensen premice and mice of size $\lambda$ and obtain their (generalised) descriptive complexity, as well as the descriptive complexity of other sets relevant to our study. 
    Here, Corollary \ref{c310} is proved. In Section \ref{section:0-dagger}, we extend the previous discussion to $L[U]$, and prove Corollary \ref{theorem:main-theorem}. Section \ref{section:final-remarks} concludes with some final remarks.\newline

    \textbf{Acknowledgments.} The first and second authors were partially supported by \textsf{Progetto PRIN 2022 – Models, Sets and Classifications – Code no. 2022TECZJA CUP G53D23001890006, funded by the European Union – NextGenerationEU – PNRR M4 C2 I1.1}. The first author has also been partially supported by the \textsf{OeAD Ernst Mach grant - worldwide}. The first author would like to thank Omer Ben-Neria for suggesting, at the beginning of this project, the use of the Dodd-Jensen core model. He also wishes to thank Philipp Lücke for valuable discussions during the \emph{Determinacy, Inner Models and Forcing Axioms} workshop held at the Erwin Schrödinger International Institute for Mathematics and Physics (ESI), Vienna, in June 2024, and several more after. The first and third authors gratefully acknowledge the support of the ESI during this event. This research was partially supported by GNSAGA INdAM, via the funding of the \emph{XXVIII Incontro di Logica AILA}, held at the University of Udine in September 2024. This research was funded in whole, or in part, by the \textsf{Austrian Science Fund (FWF) [10.55776/Y1498, 10.55776/I6087, 10.55776/STA139]}. For the purpose of open access, the authors have applied a CC BY public copyright licence to any Author Accepted Manuscript version arising from this submission.

    \section{Preliminaries}\label{section:preliminaries}

    The reader of this paper may be unfamiliar to most of the literature in $\mathsf{GDST}$ at singular cardinals of countable cofinality. Within the first two subsections we try to remediate this. The third subsection provides the reader with a toy example on how core model theory is used for obtaining consistency strength lower bounds.

    \subsection{\textsf{GDST} at singular cardinals of countable cofinality} Let $\lambda$ be an infinite cardinal and let $X$ be a $\lambda$-Polish space, i.e., a completely metrizable topological space with weight\footnote{The weight of a topological space $X$ is $\kappa$ if every dense subset $Y$ of $X$ is of size greater than or equal to $\kappa$.} $wt(X)\leq\lambda$:

    \begin{definition}[{\cite[Definition 7.0.1]{DMR}}]
	       A set $A\subseteq X$ has the \emph{$\lambda$-Perfect
           Set Property} (or $\lambda$-$\mathsf{PSP}$) if either $|A|\leq\lambda$ or $^{\lambda}2$ embeds into $A$ as a closed-in-$X$ set.
    \end{definition}

    In this paper, we restrict attention to the case $X = {}^{\lambda}2$. If $\lambda$ is a strong limit cardinal of countable cofinality, then ${}^{\lambda}2$, endowed with the bounded topology, is a $\lambda$-Polish space (see \cite[Proposition~3.12 (g,h)]{AndrettaMottoRos}). This space is called the \emph{generalised Cantor space}.

    \begin{proviso}
	Throughout this chapter and unless otherwise specified, $\lambda$ is assumed to be a strong limit cardinal of countable cofinality.
    \end{proviso}

    Given a topological space $(X,\tau)$, a set $B\subseteq X$ is \emph{$\lambda^{+}$-Borel} if it belongs to the $\lambda^{+}$-algebra $\lambda$\textbf{-Bor} generated by the open sets of $X$. This $\lambda^+$-algebra can be stratified as in the classical case: let $\lambda\hyp\boldsymbol{\Sigma}^0_1$ denote the class of open sets in $X$, and let $\lambda\hyp\boldsymbol{\Pi}^1_0$ be the class of closed sets in $X$. Then, for each $\alpha<\lambda^+$, let $\lambda\hyp\boldsymbol{\Sigma}^0_\alpha=\{\bigcup_{\beta<\lambda}A_\beta:X\setminus  A_\beta\in\lambda\hyp\boldsymbol{\Sigma}^1_\delta\wedge \delta<\alpha\}$, and $\lambda\hyp\boldsymbol{\Pi}^0_\alpha=\{X\setminus A:A\in\lambda\hyp\boldsymbol{\Sigma}^0_\alpha\}$. Finally, let $\lambda\hyp\boldsymbol{\Delta}^0_\alpha=\lambda\hyp\boldsymbol{\Sigma}^0_\alpha\cap\lambda\hyp\boldsymbol{\Pi}^0_\alpha$. Then, the class of $\lambda$-Borel sets is:
    \begin{equation*}
	\bigcup_{\alpha<\lambda^+}\lambda\hyp\boldsymbol{\Sigma^0_\alpha}=\bigcup_{\alpha<\lambda^+}\lambda\hyp\boldsymbol{\Pi^0_n}=\bigcup_{\alpha<\lambda^+}\lambda\hyp\boldsymbol{\Delta^0_n}.
\end{equation*}
    When $\lambda$ is singular, we can drop the $+$ from the above. If $X$ is a $\lambda$-Polish space, a set $A\subseteq X$ is \emph{$\lambda$-analytic} if it is the continuous image of some $\lambda$-Polish space $Y$, and it is \emph{$\lambda$-coanalytic} if it is the complement of a $\lambda$-analytic set. The class of all $\lambda$-analytic (resp. $\lambda$-coanalytic) subsets of $X$ is denoted by $\lambda\hyp\boldsymbol{\Sigma^1_1}$ (resp. $\lambda\hyp\boldsymbol{\Pi^1_1}$). Subsets of $X$ that are both $\lambda$-analytic and $\lambda$-coanalytic are \emph{$\lambda$-bianalytic}. The class of all $\lambda$-bianalytic subsets of $X$ is denoted by $\lambda\hyp\boldsymbol{\Delta^1_1}$ and, as in the classical case, it is just $\lambda$\textbf{-Bor} (see {\cite[Theorem 5.3.5]{DMR}}). These classes can be recursively extended: for each $n\geq 1$,
    $\lambda\hyp\boldsymbol{\Sigma^1_{n+1}}$ denotes the set of those subsets $A$ of $X$ for which there is some $\lambda$-Polish space $Y$ such that there exists a continuous function $f:Y\longrightarrow X$ and a $\lambda\hyp\boldsymbol{\Sigma^1_n}$-subset $B$ of $Y$ such that $f(B)=A$. Then, for each $n\geq 1$, one defines $\lambda\hyp\boldsymbol{\Pi^1_n}=\{A\subseteq X:X-A\in\lambda\hyp\boldsymbol{\Sigma^1_n}\}$ and $\lambda\hyp\boldsymbol{\Delta^1_n}=\lambda\hyp\boldsymbol{\Sigma^1_n}(X)\cap\lambda\hyp\boldsymbol{\Pi^1_n}$. If there is some $n<\omega$ such that $A\subseteq X$ belongs to some $\lambda\hyp\boldsymbol{\Sigma^1_n}$, then $A$ is \emph{$\lambda$-projective}. Thus, the class of $\lambda$-projective sets is the following:
    \begin{equation*}
	\bigcup_{n\geq 1}\lambda\hyp\boldsymbol{\Sigma^1_n}=\bigcup_{n\geq1}\lambda\hyp\boldsymbol{\Pi^1_n}=\bigcup_{n\geq1}\lambda\hyp\boldsymbol{\Delta^1_n}.
	\end{equation*}
	Both the $\lambda$-Borel and $\lambda$-projective hierarchies can be made finer. In what follows, we work in the language of set theory allowing both first- and second-order variables, the former ranging over $\lambda$, the latter over ${}^{\lambda}2$. If $Q$ is either $\exists$ or $\forall$, $Q^0$ (sometimes $Q$ alone, if there is no danger of confusion) indicates that the quantifier ranges over first order variables; by $Q^{1}$ we mean that the quantifier ranges over second order variables. Let $x\in{}^{\lambda}2\cup H_\lambda$, and let $i\in\{0,1\}$. A set $A\subseteq({}^{\lambda}2)^{k}$, with $k\in\omega$, is (lightface) $\lambda\hyp\Sigma^i_n(x)$ if and only if there is a formula $\varphi$ such that for all $y\in{}^{\lambda}2$, $y\in A$ if and only if 
		\[
			\exists^i z_1\forall^i z_2\ldots Q^i z_n\langle H_\lambda;\in,x,y,z_1,\ldots,z_n\rangle \vDash\varphi(x,y,z_1,\ldots,z_n).
		\]
	A set $A\subseteq ({}^{\lambda}2)^{k}$ is (lightface) $\lambda\hyp\Pi^i_n(x)$ if and only if it is the complement of a $\lambda\hyp\Sigma^i_n(x)$ set, and it is $\lambda\hyp\Delta^i_n(x)$ if it is both $\lambda\hyp\Sigma^i_n(x)$ and $\lambda\hyp\Pi^i_n(x)$. We write $\lambda\hyp\Delta^i_n$ for $\lambda\hyp\Sigma^i_n\cap\lambda\hyp\Pi^i_n$ for every $n\in\omega$ and $i\in\{0,1\}$. 
	Observe that for every $n$ and $i\in\{0,1\}$, the collections $\lambda\hyp\Sigma^i_n(x)$ and $\lambda\hyp\Pi^i_n(x)$ are dual, and that $\lambda\hyp\Sigma^i_n(x)\cup\lambda\hyp\Pi^i_n(x)\subseteq\lambda\hyp\Sigma^i_{n+1}(x)\cap\lambda\hyp\Pi^i_{n+1}(x)$. For the second claim, simply note that, for every $n\in\omega$ and $i\in\{0,1\}$, $\lambda\hyp\Sigma^i_n(x)\subseteq\lambda\hyp\Pi^i_{n+1}(x)$, and that $\lambda\hyp\Pi^i_n(x)\subseteq\lambda\hyp\Sigma^i_{n+1}(x)$. A set that is (lightface) $\lambda\hyp\Sigma^i_{n+1}(x)$ is the projection of a (lightface) $\lambda\hyp\Pi^i_n(x)$. A set that is $\lambda\hyp\Sigma^i_n(x)$ for some $n\in\omega$ and some $x\in{}^{\lambda}2$ is \emph{$\lambda$-arithmetical} if $i=0$ and \emph{$\lambda$-analytical} if $i=1$. The $\lambda$-arithmetical and $\lambda$-analytical hierarchies relate to the $\lambda$-Borel and $\lambda$-projective hierarchies as follows: 
	
	\begin{proposition}\label{p24} Let $\lambda$ be a singular cardinal of countable cofinality and suppose that $A\subseteq {}^{\lambda}2$. Then:
		\begin{itemize}
			\item[\emph{(1)}] $A\in\lambda\hyp\boldsymbol{\Sigma^0_n}$ if and only if $A\in\lambda\hyp\Sigma^0_n(x)$ for some $x\in {}^{\lambda}2$, and similarly for $\lambda\hyp\boldsymbol{\Pi^0_\alpha}$.
			\item[\emph{(2)}] $A\in\lambda\hyp\boldsymbol{\Sigma^1_n}$ if and only if $A\in\lambda\hyp\Sigma^1_n(x)$ for some $x\in {}^{\lambda}2$, and similarly for $\lambda\hyp\boldsymbol{\Pi^1_n}$.
		\end{itemize}
	\end{proposition}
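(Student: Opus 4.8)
The plan is to prove (1) and (2) by induction — on the ordinal $\alpha$ in (1), on $n$ in (2) — establishing the two inclusions of each ``if and only if'' separately, in close parallel with the established arguments of \cite[Proposition 12.6]{Kanamori} and \cite[Proposition 6.1.28]{Iannella2023}; part (1) is then available in the base case of part (2). The engine behind the direction from boldface to $\bigcup_a$ lightface is a \emph{parameter-amalgamation} device. Fix once and for all a bijection $V_\lambda\leftrightarrow\lambda$ — it exists because $\lambda$ is a strong limit, so $|V_\lambda|=\lambda$ — together with the induced pairing on $V_\lambda$; by means of it, any $\lambda$-indexed family $\langle a_\xi : \xi<\lambda\rangle$ of elements of $V_{\lambda+1}$ is coded by a single $a\in V_{\lambda+1}$ in such a way that $\xi\mapsto a_\xi$ is definable over $\A$ from $a$. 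Thus $V_{\lambda+1}$ is, in the relevant coding sense, closed under $\lambda$-sequences, and this is precisely what allows a ``union of at most $\lambda$ pieces, each carrying its own parameter'' to be rewritten with a single parameter.

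I would begin with the base case of (1): a set $A\subseteq{}^{\lambda}2$ is open in the bounded topology if and only if it is $\lambda\hyp\Sigma^0_1(a)$ for some $a\in V_{\lambda+1}$. Here it is essential that $\lambda$ is a strong limit, for then the bounded topology on ${}^{\lambda}2$ has a basis $\{[s] : s\in{}^{<\lambda}2\}$, with $[s]=\{y : s\subseteq y\}$, of size exactly $\lambda$; each $[s]$ is coded by an element of $V_\lambda$, and ``$y\in[s]$'' is decided by a $\Delta^0_0$ condition reading only a bounded portion of $y$. So, given $A=\bigcup_{s\in S}[s]$, one picks $a$ coding $S\subseteq{}^{<\lambda}2\subseteq V_\lambda$ and obtains the definition $A=\{y : \A\vDash\exists^0 s\,(s\in a\wedge y\in[s])\}$; conversely, the quantifier-free matrix of a $\Sigma^0_1$-definition confines $y$, for each witness to the existential quantifier, to a basic clopen set, so $A$ is a union of at most $\lambda$ such sets and hence open. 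Closed sets and $\Pi^0_1$ are handled by complementation.

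For the inductive step of (1), a set lies in $\lambda\hyp\boldsymbol{\Sigma^0_\alpha}$ (for $\alpha>1$) exactly when it is a union of at most $\lambda$ sets drawn from $\bigcup_{\beta<\alpha}\lambda\hyp\boldsymbol{\Pi^0_\beta}$ — here $\cof(\lambda)=\omega$ keeps the $\lambda$-Borel hierarchy stratified and makes $\lambda$-sized unions suffice — so, by the induction hypothesis, each piece is some $\lambda\hyp\Pi^0_{\beta_\xi}(a_\xi)$; amalgamating the $a_\xi$ into one $a$ and reading off the matching (transfinite) $\Sigma^0_\alpha$-formula over $\A$ yields a $\lambda\hyp\Sigma^0_\alpha(a)$-definition, while the converse is the observation that the set defined over $\A$ by a $\Sigma^0_\alpha$-formula sits at level $\alpha$ of the boldface hierarchy; $\Pi^0_\alpha$ is again by complementation. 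Part (2) then proceeds by induction on $n\geq1$: if $A$ is $\lambda\hyp\Sigma^1_1(a)$, say $A=\{y : \A\vDash\exists^1 z\,\theta(z,a,y)\}$ with $\theta$ arithmetical, then by (1) the set $B=\{(y,z) : \A\vDash\theta(z,a,y)\}$ is $\lambda$-Borel (in the relevant product space), hence $\lambda$-analytic by the basic structural facts about $\lambda\hyp\boldsymbol{\Sigma^1_1}$ in \cite{DMR}, and $A$, being the projection of $B$, is $\lambda$-analytic; conversely, a $\lambda$-analytic $A$ is a continuous image of a closed subset $C$ of ${}^{\lambda}\lambda$ (cf.\ \cite{DMR}), and a $V_{\lambda+1}$-code for $C$ together with the continuous map, unwound, is the parameter in a $\lambda\hyp\Sigma^1_1(a)$-definition. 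The step from $n$ to $n+1$ adds a single second-order quantifier on each side and goes through verbatim, the induction hypothesis replacing the Borel/analytic base; $\lambda\hyp\boldsymbol{\Pi^1_n}$ and $\lambda\hyp\Pi^1_n(a)$ are obtained by complementation.

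The real obstacle is not the quantifier-counting of (2) but the bookkeeping that keeps the transfinite recursion in (1) honest: one must verify that a $\Sigma^0_\alpha$-formula interpreted over $\A$ defines a set at level $\alpha$ of the $\lambda$-Borel hierarchy and not higher, that every level-$\alpha$ boldface set genuinely decomposes into at most $\lambda$ lower-level pieces so that parameter amalgamation applies, and — at the bottom — that the bounded-topology basis of ${}^{\lambda}2$ behaves as asserted in the base case. These are exactly the places where $\lambda$ being a strong limit of countable cofinality enters; once they are in hand, the remainder is routine.
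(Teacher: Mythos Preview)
Your proposal is correct and follows precisely the route the paper intends: the paper does not supply its own argument for this proposition but simply defers to \cite[Proposition 12.6]{Kanamori} and \cite[Proposition 6.1.28]{Iannella2023}, and your sketch is exactly the standard induction-plus-parameter-amalgamation argument found there. One small remark: the statement is phrased for $A\subseteq V_\lambda$ while you work in ${}^{\lambda}2$, but since the paper's lightface classes are set up relative to any $\lambda$-Polish space definable in $V_{\lambda+1}$ and the proviso fixes $\lambda$ as a strong limit, your restriction is harmless and the argument transfers verbatim to the general case.
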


    Note that in defining the relative lightface $\lambda$-projective hierarchy we permitted both ``first-order'' and ``second-order'' parameters, respectively in $H_\lambda$ and in ${}^\lambda 2$. This is new respect to the classical lightface projective hierarchy, as in that case the first-order parameters were simply definable without parameters and therefore redundant. The results in \cite{dimonte2023descriptive} show that the second-order parameters have much more strength then first-order ones, but it is still open whether first-order parameters have a meaningful impact in the construction of the hierarchy. Corollary \ref{c310} and Corollary \ref{theorem:main-theorem} seem to indicate so.

 \subsection{Coding structures} Let $\eta$ be a cardinal. Then every $z\in{}^{\eta}2$ encodes a binary relation $E_{z}$ on $\eta$ given by $\langle\alpha,\beta\rangle\in E_{z}$ if and only if $z(\langle\alpha,\beta\rangle)=1$, where $\langle\cdot , \cdot\rangle$ is an ordinal pairing function. We choose the pairing function so that it is definable in $H_\eta$ without parameters and cardinals are closed under it.

 For any $z\in{}^{\eta}2$, if $E_z$ on $\eta$ is a well-order, then its transitive collapse will be an ordinal less than $\eta$. Viceversa, if $\alpha$ is an ordinal less then $\eta$, then there exists a $z\in{}^{\eta}2$ such that the ordertype of $(z,E_z)$ is $\alpha$.

 Recall that every well-founded and extensional structure is isomorphic to a unique transitive structure, its Mostowski or transitive collapse. For any $z\in{}^{\eta}2$, if $E_z$ on $\eta$ is well-founded and extensional we denote by $tr(\eta,\in_{z})$ the transitive collapse\footnote{As noted in \cite[Remark 7.2.3]{DMR}, the word \emph{collapse} can be misleading here, for the collapse of an ordinal $\alpha < \eta$ may be an arbitrary ordinal in $[\eta, \eta^+)$. For further details, we refer the reader to that source.} of the structure $(\eta,\in_{z})$, and with $\pi_{z}$ we refer to the inverse of its collapse function.
Conversely, let $(M,\in)$ be a transitive model of size $\eta$, let $\pi:M\to\eta$ be a bijection, and let $E_{\pi}$ be the relation on $\eta$ given by $E_{\pi}(\alpha,\beta)$ if and only if $\pi^{-1}(\alpha)\in \pi^{-1}(\beta)$. It is clear that $\pi:(M,\in)\to(\eta,E_{\pi})$ is an isomorphism. In fact, $(M,\in)$ is the transitive collapse of $(\eta,E_{\pi})$. 
 
Define:
        \[
            \mathsf{WO}_{\lambda}:=\{z\in{}^{\lambda}2:z\text{ codes a well-order on $\lambda$}\}.
        \]
    It is routine to check that ``$x\in{}^{\lambda}2$ codes a linear order" is $\Pi^0_1$. In turn, to say that the coded order is well-founded can be expressed by the formula $\forall y\in{}^{\lambda}2\ \exists\alpha\in\lambda\ \forall\beta\in\lambda(y(\alpha)=1\rightarrow(x(\langle\beta,\alpha\rangle)=1\rightarrow y(\beta)=0)$
Call $\varphi(x)$ the conjunction of the previous two formulas. Then, 
\[
\mathsf{WO}_\lambda=\{x\in {}^\lambda 2:{}^\lambda 2\vDash \varphi(x)\},
\]
where $\varphi(x)$ is a $\Pi^1_1$-formula. It thus follows that:
\begin{proposition}\label{WO_is_lightface_pi_1_1} $\mathsf{WO}_{\lambda}$ is \emph{(lightface)} $\lambda\hyp\Pi^1_1$. 
\end{proposition}

From now on, if $z\in \mathsf{WO}_{\lambda}$ we write $<_{z}$ instead of $E_{z}$. 

Every set of ordinals coded by elements in a $\lambda$-analytic subset of $\mathsf{WO}_{\lambda}$ is bounded in $\lambda^{+}$:

\begin{lemma}[Boundedness Lemma {\cite[Theorem 6.2.3]{DMR}}]\label{lemma:boundedness-lemma} Let $\lambda$ be such that $2^{<\lambda}=\lambda$. If $A\subseteq \mathsf{WO}_{\lambda}$ is $\lambda\hyp\boldsymbol{\Sigma^1_1}$, then $\sup\{ot(<_{x}):x\in A\}<\lambda^{+}$.
\end{lemma}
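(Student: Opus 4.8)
The plan is to run the classical $\boldsymbol{\Sigma}^1_1$-boundedness argument, using the order type as a $\lambda$-coanalytic norm on $\mathsf{WO}_{\lambda}$ and transferring it to the generalised Cantor space. Everything rests on one comparison relation: for $x,y\in{}^{\lambda}2$ I would set
\[
x\preceq y\iff \exists^1 f\,\bigl[\,f\colon\lambda\to\lambda\text{ is injective and }\forall\alpha\,\forall\beta\,(\langle\alpha,\beta\rangle\in E_{x}\to\langle f(\alpha),f(\beta)\rangle\in E_{y})\,\bigr],
\]
so that $f$ witnesses an order-embedding of $(\lambda,E_{x})$ into $(\lambda,E_{y})$. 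Since $\lambda$ is a strong limit we have $2^{<\lambda}=\lambda$ and $\lambda\cong\lambda\times\lambda$, so every such $f$ is coded by a single element of ${}^{\lambda}2$; the matrix inside the brackets is first order over $\A=\langle V_{\lambda},V_{\lambda+1}\rangle$, hence $\lambda$-Borel, and prefixing the second-order quantifier $\exists^1 f$ shows that $\preceq$ is $\lambda\hyp\boldsymbol{\Sigma^1_1}$ (using Proposition \ref{p24} to absorb the parameter). The basic fact about well-orders I would invoke is that for $x,y\in\mathsf{WO}_{\lambda}$ one has $x\preceq y$ iff $ot(<_{x})\le ot(<_{y})$, together with its consequence that if $E_{w}$ is a linear order and $w\preceq x$ for some $x\in\mathsf{WO}_{\lambda}$, then $<_{w}$ embeds into a well-order, hence is itself well-founded, so $w\in\mathsf{WO}_{\lambda}$.

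Granting this, I would argue by contradiction. Assume $A\subseteq\mathsf{WO}_{\lambda}$ is $\lambda$-analytic but $\sup\{ot(<_{x}):x\in A\}=\lambda^{+}$, so the order types of members of $A$ are cofinal in $\lambda^{+}$. The key reduction is the equivalence
\[
w\in\mathsf{WO}_{\lambda}\iff\bigl(E_{w}\text{ is a linear order}\bigr)\wedge\exists x\,\bigl(x\in A\wedge w\preceq x\bigr),
\]
valid for all $w\in{}^{\lambda}2$: the forward direction uses cofinality to choose $x\in A$ with $ot(<_{x})\ge ot(<_{w})$ and hence $w\preceq x$, while the backward direction is the well-foundedness observation above. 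Now the right-hand side is $\lambda\hyp\boldsymbol{\Sigma^1_1}$, since ``$E_{w}$ is a linear order'' is $\lambda$-Borel, ``$x\in A$'' and ``$w\preceq x$'' are $\lambda$-analytic, and the block $\exists x(\cdots)$ is a second-order existential quantifier over a $\lambda$-analytic relation, which stays $\lambda$-analytic. Thus $\mathsf{WO}_{\lambda}$ would itself be $\lambda$-analytic.

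This contradicts the fact that $\mathsf{WO}_{\lambda}$ is properly $\lambda$-coanalytic, i.e. not $\lambda$-analytic, and I expect establishing this last fact to be the main obstacle, since it is the only step that is not pure complexity bookkeeping. Two routes are available. The first keeps the classical flavour: $\mathsf{WO}_{\lambda}$ is $\lambda\hyp\boldsymbol{\Pi^1_1}$-complete (every $\lambda$-coanalytic subset of ${}^{\lambda}2$ continuously reduces to it), so if it were $\lambda$-analytic the first level of the $\lambda$-projective hierarchy would collapse to $\lambda\hyp\boldsymbol{\Sigma^1_1}=\lambda\hyp\boldsymbol{\Pi^1_1}$, which is in turn refuted by the usual diagonalisation against a universal $\lambda\hyp\boldsymbol{\Sigma^1_1}$ set obtained from an enumeration of $\Sigma^1_1$-formulas interpreted in $\A$. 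The second route avoids the completeness and hierarchy machinery and instead proves the bound directly: the relation $(x',\xi')\prec(x,\xi)\iff x=x'\in A\wedge\xi'<_{x}\xi$ on ${}^{\lambda}2\times\lambda$ is well-founded and $\lambda$-Suslin, and its rank equals $\sup\{ot(<_{x}):x\in A\}$, so a generalised Kunen--Martin theorem for $\lambda$-Suslin well-founded relations (again powered by $2^{<\lambda}=\lambda$) bounds it below $\lambda^{+}$ at once.

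I would present the norm argument as the main line, and the rank argument as the self-contained alternative that packages the non-analyticity of $\mathsf{WO}_{\lambda}$ into a single rank computation. In either case the genuinely new work, as opposed to the classical idea, lies in checking that the codings of embeddings as elements of ${}^{\lambda}2$, the preservation of $\lambda\hyp\boldsymbol{\Sigma^1_1}$ under the second-order projection, and the absorption of parameters via Proposition \ref{p24} all behave correctly in the singular, countable-cofinality setting; that is the bookkeeping I would verify with care, since it is where the generalisation, rather than the underlying argument, could fail.
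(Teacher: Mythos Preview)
The paper does not give its own proof of this lemma: it is quoted as a background result from \cite[Theorem 6.2.3]{DMR} and is used as a black box throughout. So there is nothing in the present paper to compare your proposal against.

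That said, your argument is the standard adaptation of the classical $\boldsymbol{\Sigma}^1_1$-boundedness proof and is correct in outline. The reduction showing that a cofinal $\lambda$-analytic $A\subseteq\mathsf{WO}_{\lambda}$ would make $\mathsf{WO}_{\lambda}$ itself $\lambda$-analytic is exactly right, and both of your routes to the required non-analyticity of $\mathsf{WO}_{\lambda}$ (universal-set diagonalisation, or a generalised Kunen--Martin bound) are viable in this setting. One small point worth tightening: in your definition of $\preceq$ you should ask that $f$ be an \emph{order}-embedding (i.e.\ also $\langle f(\alpha),f(\beta)\rangle\in E_{y}\to\langle\alpha,\beta\rangle\in E_{x}$, or equivalently require $f$ monotone on the linear order $E_x$), not merely injective and forward-preserving; with only the forward condition an injective map from a non-well-founded linear order into a well-order is not excluded in general, so the backward direction of your key equivalence needs the two-sided preservation. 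This is a minor fix and does not change the complexity calculation.
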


Let $\mathsf{EW}_{\lambda}$ denote the set of codes for extensional and well-founded structures of size $\lambda$. Observe that extensionality can be expressed by a $\Pi^0_1$-formula, for  $x\in{}^{\lambda}2$ codes an extensional relation if 
$\forall\alpha\in\lambda\ \forall\beta\in\lambda\ \forall\delta\in\lambda(\alpha=\beta\leftrightarrow[x(\langle\delta,\alpha\rangle)=1\leftrightarrow x(\langle\delta,\beta\rangle)=1])$. Since, as shown above, well-foundedness can be expressed by a $\Pi^1_1$-formula, it follows that:

\begin{proposition}\label{EW_is_lightface_pi_1_1}
	$\mathsf{EW}_{\lambda}$ is \emph{(lightface)} $\lambda\hyp\Pi^1_1$.
\end{proposition}

If $z \in \mathsf{EW}_\lambda$, we use the notation $\in_z$ in place of $E_z$.

We want now to describe ``$\pi_x(b)=\alpha$'', with $x \in \mathsf{EW}_\lambda$, $b\in H_\lambda$ and $\alpha<\lambda$, in the easiest possible way (i.e., with a formula of the lowest complexity). This holds if there is a function $f:\lambda\to H_\lambda$ that is an isomorphism between $(trcl_x(\alpha),\in_x)$ and $(trcl(b),\in)$. We need therefore to code functions from $\lambda$ to $H_\lambda$.

If $b\in H_\lambda$ then $|trcl(b)|<\lambda$, so there exists a $\eta<\lambda$ cardinal and a $z\in{}^\eta 2$ such that $(\eta,\in_z)$ is isomorphic to $(trcl(b),\in)$. We can therefore code $trcl(b)$ with an $x\in{}^\lambda 2$, simply by defining $x\upharpoonright\eta=z$, and for any $\alpha\geq\eta$ $x(\alpha)=0$. The definition of such a code is: $\exists \eta<\lambda$ such that $\eta$ is a cardinal, $(\eta,\in_{x\upharpoonright\eta})$ is well-founded and extensional, and $\forall \alpha\geq\eta\, x(\alpha)=0$. Well-founded and extensionality are defined as for $\mathsf{EW}_\lambda$: extensionality by $\forall\alpha\in\eta\ \forall\beta\in\eta\ \forall\delta\in\eta(\alpha=\beta\leftrightarrow[x(\langle\delta,\alpha\rangle)=1\leftrightarrow x(\langle\delta,\beta\rangle)=1])$, that is a $\lambda\hyp\Delta^0_0(\eta)$ formula, and well-foundedness by $\forall y\in{}^{\eta}2\ \exists\alpha\in\eta\ \forall\beta\in\eta(y(\alpha)=1\rightarrow(x(\langle\beta,\alpha\rangle)=1\rightarrow y(\beta)=0)$, that is again $\lambda\hyp\Delta^0_0(\eta)$. Moreover, ``$\eta$ is a cardinal'' is definable in $H_\lambda$. 

Finally, we define $F(\lambda,H_\lambda)$ as the set of $x\in{}^\lambda 2$ that code $\lambda$-sequences of elements as above, i.e.:
\begin{gather*}
    x\in F(\lambda, H_\lambda)\quad\text{iff}\quad \forall\alpha<\lambda\ \exists \eta<\lambda (\eta\text{ is a cardinal }\land\\
    \land \forall\beta\in\eta\ \forall\gamma\in\eta\ \forall\delta\in\eta(\beta=\gamma\leftrightarrow[x(\langle\alpha,\delta,\beta\rangle)=1\leftrightarrow x(\langle\alpha,\delta,\gamma\rangle)=1])\land\\
    \land \forall y\in{}^{\eta}2\ \exists\beta\in\eta\ \forall\gamma\in\eta(y(\beta)=1\rightarrow(x(\langle\alpha,\gamma,\beta\rangle)=1\rightarrow y(\gamma)=0)\land\\
    \land\forall\beta\geq\eta\ x(\langle\alpha,\beta\rangle)=0.
\end{gather*}

The set $F(\lambda, H_\lambda)$ is therefore $\lambda\hyp\Sigma^1_0$.

When $x\in F(\lambda,H_\lambda)$, call $(x)_\alpha$ the element of ${}^\lambda 2$ that codes the $\alpha$-the element of the sequence, i.e., $(x)_\alpha(\beta)=x(\langle\alpha,\beta\rangle)$ for any $\alpha,\beta<\lambda$. Then the $\eta$ that witnesses this is unique, otherwise $(x)_\alpha\upharpoonright\eta$ would not be extensional, and therefore also the transitive $c\in H_\lambda$ that is coded by it is unique. This can be all defined in $H_\lambda$: the transitive set $c\in H_\lambda$ is coded by $(x)_\alpha$ iff there exists an isomorphism $f$ between $(\eta,\in_{(x)_\alpha\upharpoonright \eta})$ and $(c,\in)$, i.e., $f:\eta\to c$ is a bijection and $\forall\beta,\gamma<\eta (x(\langle\alpha,\beta,\gamma\rangle)=1 \leftrightarrow f(\beta)\in f(\gamma))$. We define $c(x,\alpha)$ as the set coded by $(x)_\alpha$.

For any $x\in\mathsf{EW}_\lambda$ and any $\alpha<\lambda$, the set $trcl_x(\alpha)$ is easily seen as $\lambda\hyp\Sigma^1_0$-definable, as $\beta\in trcl_x(\alpha)$ iff there exists a finite sequence $\langle\alpha_0,\dots,\alpha_n\rangle$ such that $\alpha_0=\beta$, $\alpha_n=\alpha$, and for every $0\leq i<n$ $x(\langle\alpha_i,\alpha_{i+1}\rangle)=1$. On the other hand, if $b\in H_\lambda$ then also $trcl(b)$ is easily definable in $H_\lambda$ in a similar way. 

To motivate the following definition, observe that in the classical setting, where $\lambda = \omega$, verifying whether $\pi^{-1}_z(m) = n$ for some $m,n \in \omega$ amounts to finding an $\in$-preserving bijection between the $\in_z$-predecessors of $n$ and $m$. Since the $\in_z$-predecessors of $n$ are bounded in $\omega$, such a bijection must go from some $n'<\omega$ to $m$. There are countable many of such bijections, so the set $\{z\in{}^{\omega}2:\pi_{z}^{-1}(m)=n\}$ is $\omega\hyp$Borel, or (lightface) $\omega\hyp\Sigma^1_0$ with $m$ and $n$ as parameters. This is not the case anymore in the generalised context: it might happen that the set of $\in_{z}$-predecessors of some $\alpha<\lambda$ isn't bounded in $\lambda$, so we need a full function from $\lambda$ to $\lambda$ to check whether $\pi^{-1}_{z}(\alpha)<\lambda$, and this goes beyond $\lambda$-Borelness: it is indeed $\lambda$-analytic relative to $\mathsf{EW}_\lambda$ (see \cite[Lemma 7.2.4]{DMR}). Naturally, we are interested in maintaining as low as possible the descriptive complexity of the sets involved in our computations. In order to do so, we impose a boundedness condition on the set of codes for extensional and well-founded structures:

\begin{definition}[{\cite[Definition 7.2.5]{DMR}}]
	Let $z\in{}^{\lambda}2$ be a code for an extensional and well-founded structure. We say that $(\lambda,\in_{z})$ has a \emph{bounded} collapse if and only if for each $b\in H_{\lambda}$, $\pi^{-1}_{z}(b)$, when it exists, has boundedly many $\in_{z}$-predecessors.
\end{definition}

We denote by $\mathsf{BC}_{\lambda}$ the set of codes for such bounded structures. As the following shows, it is safe to consider only codes for bounded, extensional and well-founded structures:

\begin{proposition}[{\cite[Lemma 7.2.7]{DMR}}] For any transitive structure $(M,\in)$ of size $\lambda$ there exists a $z\in\mathsf{BC}_{\lambda}$ such that $(M,\in)\cong(\lambda,\in_{z})$. 
\end{proposition}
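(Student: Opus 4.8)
The plan is to produce a suitable bijection $\pi\colon M\to\lambda$ and then let $z\in{}^{\lambda}2$ be the code of the induced relation $E_{\pi}$ on $\lambda$. By the discussion preceding the definition of a bounded collapse, any such $\pi$ is an isomorphism $(M,\in)\cong(\lambda,E_{\pi})$; since $(M,\in)$ is transitive, hence extensional and well-founded, $z\in\mathsf{EW}_{\lambda}$, so we may write $\in_{z}$ for $E_{\pi}$, and $(M,\in)$ is the transitive collapse of $(\lambda,\in_{z})$ with collapse map $\pi_{z}=\pi^{-1}$. Thus it remains only to choose $\pi$ so that $(\lambda,\in_{z})$ has a bounded collapse. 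Because $M$ is transitive, for $b\in M$ the set of $\in_{z}$-predecessors of $\pi(b)=\pi_{z}^{-1}(b)$ is precisely $\pi[b]=\{\pi(c):c\in b\}$, so the whole task reduces to arranging that $\pi[b]$ be bounded in $\lambda$ for every $b\in M\cap H_{\lambda}$. I expect this to be the crux of the matter: since $\operatorname{cf}(\lambda)=\omega$, a subset of $\lambda$ of size $<\lambda$ need not be bounded, so an arbitrary bijection will not work and $\pi$ must be built with care.

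To build it, I would first fix an increasing sequence of cardinals $\langle\lambda_{n}:n<\omega\rangle$ cofinal in $\lambda$ and set $A_{n}:=M\cap H_{\lambda_{n}^{+}}$. These sets are transitive and increasing, their union is $M\cap H_{\lambda}$, and, $\lambda$ being a strong limit, $|A_{n}|\le|H_{\lambda_{n}^{+}}|=2^{\lambda_{n}}<\lambda$. The key feature of this stratification is that if $b\in M\cap H_{\lambda}$ then there is $n$ with $b\in H_{\lambda_{n}^{+}}$, so $b\in A_{n}$, and moreover $b\subseteq\Trcl(\{b\})\subseteq H_{\lambda_{n}^{+}}$, whence (as $b\subseteq M$ by transitivity) $b\subseteq A_{n}$.

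Using $|M|=\lambda$, $|A_{n}|<\lambda$, and $\bigcup_{n}A_{n}=M\cap H_{\lambda}$, I would then fix, by routine bookkeeping (well-order $M$ in order type $\lambda$, take $M_{n}$ to be $A_{n}$ together with the first $\lambda_{n}$ elements of this order, and pass to a subsequence to force the cardinalities to grow strictly), an increasing sequence $\langle M_{n}:n<\omega\rangle$ of subsets of $M$ with $A_{n}\subseteq M_{n}$, $\bigcup_{n}M_{n}=M$, each $\kappa_{n}:=|M_{n}|$ an infinite cardinal below $\lambda$, $\kappa_{n}<\kappa_{n+1}$, and $\sup_{n}\kappa_{n}=\lambda$. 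Now define $\pi\colon M\to\lambda$ by sending $M_{n}\setminus M_{n-1}$ bijectively onto the interval $[\kappa_{n-1},\kappa_{n})$ for each $n$, with $M_{-1}=\emptyset$ and $\kappa_{-1}=0$; this is legitimate because $|M_{n}\setminus M_{n-1}|=\kappa_{n}=|[\kappa_{n-1},\kappa_{n})|$, and it yields a genuine bijection onto $\lambda$ since $\sup_{n}\kappa_{n}=\lambda$. Then $\pi[M_{n}]=\kappa_{n}<\lambda$ for every $n$, so for $b\in M\cap H_{\lambda}$, choosing $n$ with $b\subseteq A_{n}\subseteq M_{n}$, we get $\pi[b]\subseteq\kappa_{n}$; hence $\pi[b]$ is bounded in $\lambda$. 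By the first paragraph, the resulting code $z$ of $E_{\pi}$ lies in $\mathsf{BC}_{\lambda}$ and satisfies $(M,\in)\cong(\lambda,\in_{z})$, as required. The substance of the argument is concentrated in the stratification step, which is precisely where countable cofinality and the strong-limit hypothesis are used; everything else is cardinal arithmetic below $\lambda$.
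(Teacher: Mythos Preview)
The paper does not actually supply a proof of this proposition; it is quoted from \cite[Lemma 7.2.9]{DMR} and stated without argument. So there is nothing in the present paper to compare your attempt against.

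That said, your argument is correct and is essentially the natural one. The reduction in your first paragraph is exactly right: since $M$ is transitive, the collapse map of $(\lambda,\in_{z})$ is $\pi^{-1}$, and the $\in_{z}$-predecessors of $\pi(b)$ are precisely $\pi[b]$, so the task is to arrange $\pi[b]$ bounded for every $b\in M\cap H_{\lambda}$. Your stratification $A_{n}=M\cap H_{\lambda_{n}^{+}}$ is the key move, and the two hypotheses on $\lambda$ enter exactly where you say: countable cofinality gives the $\omega$-sequence of strata, and the strong-limit hypothesis gives $|A_{n}|\le 2^{\lambda_{n}}<\lambda$. The bookkeeping to produce the $M_{n}$'s with strictly increasing cardinalities cofinal in $\lambda$ is routine, and your observation that passing to a subsequence preserves both $\bigcup_{k}M_{n_{k}}=M$ (since the $M_{n}$ are increasing) and the containment $A_{n}\subseteq M_{n_{k}}$ for large $k$ handles the only subtle point there. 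The final verification that $\pi$ is a bijection onto $\lambda$ uses $|M_{n}\setminus M_{n-1}|=\kappa_{n}=|[\kappa_{n-1},\kappa_{n})|$, which holds because the $\kappa_{n}$ are strictly increasing infinite cardinals.
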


Moreover:

\begin{proposition}\label{prop:BC-is-pi-11}
	$\mathsf{BC}_\lambda$ is \emph{(lightface)} $\lambda\hyp\Pi^1_1$. 
\end{proposition}
\begin{proof}
	First note that $x\in\mathsf{BC}_\lambda$ if and only if $x\in\mathsf{EW}_\lambda\wedge \forall\alpha\in\lambda\forall b\in H_\lambda[\pi^{-1}_x(b)=\alpha\rightarrow\exists\gamma\in\lambda\forall\delta\in\lambda(\delta\in trcl_x(\alpha)\rightarrow\delta\in\gamma)]$. By Proposition \ref{EW_is_lightface_pi_1_1}, $x\in\mathsf{EW}_\lambda$ is (lightface) $\lambda\hyp\Pi^1_1$. For the rest, observe that $\pi^{-1}_x(\alpha)=b$ is expressible by a $\Sigma^1_1$-formula with $x,\alpha$ and $b$ as parameters, for it holds if and only if there is a function $f\in{}^{\lambda}H_\lambda$ such that for every $\beta,\gamma<\lambda$ one has that $f\upharpoonright trcl_x(\alpha):trcl_x(\alpha)\rightarrow trcl(b)$ is a bijection (where, again, $trcl_x(\alpha)$ is defined as $trcl(b)$ above with $\in_x$ instead of $\in$); that if $\beta,\gamma\in trcl_x(\alpha)$, then $\beta\in_x\gamma\leftrightarrow f(\beta)\in f(\gamma)$; and that if $\beta\notin trcl_x(\alpha)$, then $f(\beta)=0$. More precisely:
    \begin{gather*}
      \exists z\in{}^\lambda 2\ (z\in F(\lambda,H_\lambda) \land \forall a\in trcl(b)\ \exists \beta<\lambda (\beta\in trcl_x(\alpha) \land c(z,\beta)=trcl(a))\land\\
      \land \forall\beta\in trcl_x(\alpha)\ \forall\gamma\in trcl_x(\alpha) (c(z,\beta)=c(z,\gamma)\rightarrow\beta=\gamma)\land\\
      \land  \forall\beta\in trcl_x(\alpha)\ \forall\gamma\in trcl_x(\alpha) ( x(\langle \beta,\gamma \rangle)=1 \leftrightarrow c(z,\beta)\in c(z,\gamma))\land\\
      \land \forall \beta\notin trcl_x(\alpha)\ c(z,\beta)=0).
    \end{gather*}
\end{proof}

We close this subsection with some useful results:

\begin{lemma}\label{lemma:complexity_of_satisfaction}
    Let $\Phi$ be a finite family of $\mathcal{L}_\in(\dot{c}_1,\ldots,\dot{c}_n)$-sentences, with $\dot{c}_1,\ldots,\dot{c}_n$ a family of extra symbols. Then, $\{(z_0,\ldots,z_n)\in{}^{n+1}(^{\lambda}2):(\lambda,\in_{z_0},z_1,\ldots,z_n)\vDash\Phi\}$ is \emph{(lightface)} $\lambda\hyp\Delta^1_1$ in $^{n+1}({}^{\lambda}2)$.
\end{lemma}
\begin{proof}
    For a reference, see \cite[Proposition 13.8]{Kanamori}. The proof is quite straightforward, although lengthy. A proof for a family $\Phi$ of $\mathcal L_\in$-sentences can be found in \cite[Chapter 3. \textsection 5]{drake1974introduction}. The proof easily generalises to our case. 
\end{proof}

\begin{lemma}\label{lemma:y-in-transitive-closure-of-lambda-inx-is-pi11}
    The set $\{(x,y)\in \mathsf{BC}_{\lambda}\times{}^{\lambda}2:y\in tr(\lambda,\in_x)\}$ is \emph{(lightface)} $\lambda\hyp\Pi^1_1$ in ${}^{\lambda}2\times{}^{\lambda}2$.
\end{lemma}
\begin{proof}
    Simply observe that $y\in tr(\lambda,\in_x)$ if and only if $\exists\alpha<\lambda(\pi^{-1}_{x}(\alpha)=y)$. 
\end{proof}

\subsection{A toy example}

Roughly speaking, a core model $K$ is a definable structure that is, under suitable anti-large cardinal assumptions, very close to $V$ in the sense that every uncountable set in $V$ is covered by a set in $K$ of the same size. If $K$ is a model of \textsf{AC}, this implies that $(\lambda^{+})^{K}=\lambda^{+}$ for every singular cardinal $\lambda$. An inner model satisfying the first property is said to have the full covering property, the second property is known as the weak covering property (see Definition \ref{def:covering-properties}). 

Core models are useful to fix consistency strength lower bounds, and the strategy for that is quite standard. If one is to fix the consistency strength lower bound of an arbitrary statement $\varphi$, one first shows that $\neg\varphi$ holds in the core model. As said, under the corresponding anti-large cardinal assumptions the core model is close to $V$, and this closeness might serve to show that $\neg\varphi$ also holds in $V$. It then follows that if $\varphi$ is true in $V$, these anti-large cardinal assumptions must fail, for otherwise one would get a contradiction. 

Typically, core models satisfy Global Choice, i.e., they have a definable well-order. We can use such well-order and the parameter $\lambda$ to define subsets of ${}^\lambda 2$. In the following, we will use the term $\Sigma_n(\lambda)$-definable (resp. $\Pi_n(\lambda)$ and $\Delta_n(\lambda)$) to indicate a set that is $\Sigma_n$-definable (resp. $\Pi_n$ and $\Delta_n$) with parameter $\lambda$.

For example, Corollary \ref{corollary:toy-example-Delta_1-definable-subset-wo-psp} below, consequence of Proposition \ref{proposition:toy-example}, says that, in $L$, there is a $\Pi_{1}(\lambda)$-definable (in fact, $\Delta_1(\lambda)$-definable) subset $A$ of ${}^{\lambda}2$ that does not have the $\lambda$-$\mathsf{PSP}$. By the Covering Theorem for $L$, if $0^{\sharp}$ doesn't exist, then every uncountable set in $V$ is covered by a set in $L$ of the same size. This in turn implies, being $L$ a model of choice, that if $0^{\sharp}$ doesn't exist, then $L$ computes correctly the successor of every singular cardinal, which is enough to prove that, under that same assumption, the set $A$, as seen from $V$, is also a $\Pi_2(\lambda)$-definable subset of ${}^{\lambda}2$ without the $\lambda\hyp\mathsf{PSP}$. It thus follows that if every $\Pi_{2}(\lambda)$-definable subset of $^{\lambda}2$ in $V$ has the $\lambda$-$\mathsf{PSP}$, then $0^{\sharp}$ must exist, as written in Corollary \ref{corollary:toy-example-if-0-sharp-pi2-set-without-psp}. 

\begin{proposition}\label{proposition:toy-example}
	Let $K$ be an inner model with a $\Pi_{n}$-definable well order $\leq_{K}$ with $n\geq1$. Then, there is a set $A\subseteq {}^{\lambda}2$ in $K$ that is $\Pi_n(\lambda)$-definable and without the $\lambda$-$\mathsf{PSP}$.
\end{proposition}
\begin{proof} 
	We work in $K$. Let $A_{K}=\{x\in \mathsf{WO}_{\lambda}:\forall y\,((\lambda,\in_x)\cong(\lambda,\in_y)\rightarrow x\leq_{K}y)\}$. The statement $``(\lambda,\in_x)\cong(\lambda,\in_y)"$ can be expressed as a $\Sigma_1$-formula with parameter $\lambda$, say $\exists w\varphi(x,y,w,\lambda)$, and $``x\leq_{K}y"$ is by assumption  $\Pi_{n}$-definable with $n\geq 1$, so say that $``x\leq_{K}y"$ is of the form $$\forall x_{n-1}\exists x_{n-2}\forall x_{n-3}\ldots\psi(x,y,x_0,\ldots,x_{n-1}).$$ The formula $x\in \mathsf{WO}_{\lambda}$ is $\Pi_1(\lambda)$, so it can be written as $\forall z_1\theta(x,z_1,\lambda)$ with $\theta(x,z_1,\lambda)$ a $\Delta_0$-formula. It then follows that
	\begin{equation*}
		\begin{split}
			x\in A_{K} \leftrightarrow& \,\forall z_1\forall y\forall w\,\forall x_{n-1}\exists x_{n-2}\forall x_{n-3}\ldots \\&(\theta(x,z_1,\lambda)\wedge(\neg\varphi(x,y,w,\lambda)\vee\psi(x,y,x_{0},\ldots,x_{n-1}))),
		\end{split}
	\end{equation*}
	therefore $A_{K}$ is $\Pi_n(\lambda)$-definable.
	
	Now, for every $\alpha<\lambda^{+}$ there is some $x\in {}^{\lambda}2$ coding a well-order on $\lambda$ of order type $\alpha$. By definition, there is a unique code in $A_{K}$ for each $\alpha<\lambda^{+}$, so $|A_{K}|=\lambda^{+}$. Therefore, if $A_{K}$ had the $\lambda$-$\mathsf{PSP}$, there would exist an embedding $\pi:{}^{\lambda}2\to A_{K}$. Since $\pi[{}^{\lambda}2]$ is $\lambda$-analytic and $|\pi[{}^{\lambda}2] |\geq \lambda^{+}$, the set $\{ot(<_{x}):x\in\pi[{}^{\lambda}2]\}$ would be unbounded in $\lambda^{+}$. But this contradicts the Boundedness Lemma, so $A_{K}$ does not have the $\lambda$-$\mathsf{PSP}$. 
\end{proof}

\begin{corollary}
  Let $K$ be an inner model with a $\Sigma_{n}$- or $\Delta_n$-definable well order $\leq_{K}$ with $n\geq1$. Then, there is a set $A\subseteq {}^{\lambda}2$ in $K$ that is $\Pi_n(\lambda)$-definable and without the $\lambda$-$\mathsf{PSP}$.
\end{corollary}
\begin{proof}
    Note that any $\Pi_n$-definable linear order is also $\Sigma_n$-definable, and viceversa, therefore Proposition \ref{proposition:toy-example} works for $K$.
\end{proof}

Thus, since $L$ admits a $\Sigma_1$-definable well-ordering, Proposition \ref{proposition:toy-example} already implies that there is a $\Pi_1(\lambda)$-definable subset of ${}^{\lambda}2$ in $L$ without the $\lambda$-$\mathsf{PSP}$. In this case, however, we can do better. Recall that a well-order $\leq$ on a transitive model $M$ of \textsf{ZFC} is said to be $\Delta_1$-good if for every $x\in M$ the relation $z(x):=\{y:y\leq x\}$ is $\Delta_1$. It is well known that $L$ admits a $ \Delta_1$-good well-ordering $\leq_L$.  
Moreover, note that $x\in A_L$ if and only if
\[
x\in\mathsf{WO}_\lambda\wedge\exists z(z=\{y\in{}^{\lambda}2:y\leq_{L}x\}\wedge \forall y\in z((\lambda,\in_x)\ncong(\lambda,\in_y))),
\]
which is easily seen to be $\Delta_1(\lambda)$ (well-foundedness is $\Delta_1$, for it is $\Pi_1$ via the non-existence of an infinite descending chain and $\Sigma_1$ via the existence of a ranking function). That is:

\begin{corollary}\label{corollary:toy-example-Delta_1-definable-subset-wo-psp} In $L$, there is a $\Delta_1(\lambda)$-definable subset of ${}^{\lambda}2$ without the $\lambda\hyp\mathsf{PSP}$.
\end{corollary}

Now, let $A_{L}^{V}$ be the set
$$\{x\in \mathsf{WO}_{\lambda}\cap L:\forall y\in{}^{\lambda}2\cap L((\lambda,\in_x)\cong(\lambda,\in_y)\rightarrow x\leq_{L}y))^{L}\}.$$
First note that the relativization to $L$ of $(\lambda,\in_x)\cong(\lambda,\in_y)$ and $x\leq_{L}y$ are respectively $\Sigma_1(\lambda)$ and $\Delta_1$.  On the other hand, the formula $x\in \mathsf{WO}_{\lambda}\cap L$ is $\Sigma_1(\lambda)$. Indeed, $x\in \mathsf{WO}_{\lambda}\cap L$ if and only if $x\in \mathsf{WO}_{\lambda}$, which is, as already pointed out, $\Delta_1(\lambda)$, and $x$ is constructible, which is $\Sigma_1$ (see, e.g., \cite[Lemma 2.13]{Devlin}). We then write $x\in WO_{\lambda}\cap L$ as $\exists z_1\theta(x,z_1,\lambda)$ and $y\in L$ is $\Sigma_1$ as $\exists z_2\sigma(y,z_2)$. Then $A_{L}^V$ is defined by the formula:
\begin{equation*}
	\begin{split}
		x\in A_{L}^V \leftrightarrow& \,\forall y\forall z_2\forall w\exists x_{0}\exists z_1 \\&(\theta(x,z_1,\lambda)\wedge(\neg\sigma(y,z_2)\vee\neg\varphi(x,y,w,\lambda)\vee\psi(x,y,x_{0})))),
	\end{split}
\end{equation*}
which is $\Pi_2(\lambda)$.

Assume now that $0^{\sharp}$ doesn't exist, so that $L$ has the weak covering property and computes correctly the successor of every singular cardinal. Then, by an argument as in the proof of Proposition \ref{proposition:toy-example}, we get that $A_{L}^{V}$ doesn't have the $\lambda$-$\mathsf{PSP}$. We have proved the following:

\begin{corollary}\label{corollary:toy-example-if-0-sharp-pi2-set-without-psp} If $0^{\sharp}$ doesn't exist, there is a $\Pi_2(\lambda)$-definable subset of ${}^{\lambda}2$ without the $\lambda$-$\mathsf{PSP}$. 
\end{corollary} 

In other words, a sufficient condition for the existence of $0^{\sharp}$ is that all $\Pi_2(\lambda)$-definable subsets of ${}^{\lambda}2$ have the $\lambda$-$\mathsf{PSP}$. It is easy to see that the same holds for $a^{\sharp}$ for every $a\in{}^{\lambda}2$. Observe also that Proposition \ref{proposition:toy-example} and Corollary \ref{corollary:toy-example-Delta_1-definable-subset-wo-psp} hold when $\lambda=\omega$. The proof of Corollary \ref{corollary:toy-example-if-0-sharp-pi2-set-without-psp} breaks down for countable $\lambda$, as the Covering Lemma for $L$ applies only to cardinals $\lambda$ greater than $\omega$.


\section{The $\lambda\hyp\mathsf{PSP}$ up to the existence of an inner model with a measurable cardinal}\label{section:dodd-jensen}

In this section, we prove that if there is a strong limit cardinal $\lambda$ of countable cofinality such that all (lightface) $\lambda\hyp\Pi^1_1$ subsets of ${}^{\lambda}2$ have the $\lambda\hyp\mathsf{PSP}$, then there is an inner model with a measurable cardinal. For this, we work with the Dodd-Jensen core model\footnote{For other recent applications of the Dodd-Jensen core model, see \cite{ben2025definable}.}. For an intuitive picture of Dodd-Jensen premice and mice, the reader may look at the first paragraphs of the next section.

    \begin{definition}\label{definition:dj-premouse} Let $\kappa$ be an uncountable ordinal. $M$ is a \emph{Dodd-Jensen premouse at $\kappa$} if it is a structure $\mathcal{J}^{U}_{\alpha}$ of the form $( J_{\alpha}[U],\in,U)$ such that $$\mathcal{J}_{\alpha}^{U}\vDash``U \text{ is a normal measure on } \kappa".$$
    \end{definition}
    
    The reader, if not very familiar with Jensen's fine hierarchy, may prefer to think of $J_{\alpha}$ simply as $L_{\alpha}$ and will still be able to follow without difficulties.

    \begin{definition} Given $M$ a Dodd-Jensen premouse at $\kappa$, the \emph{lower part of $M$} is $lp(M)=M\cap V_{\kappa}$.
    \end{definition}

    Let $M$ be a Dodd-Jensen premouse at an uncountable ordinal $\kappa$ and let $\delta$ be an ordinal. A \emph{linear iteration of $M$ of length $\delta$} is a sequence $\langle M_{\alpha},\pi_{\alpha,\beta}:\alpha\leq\beta<\delta\rangle$ where
\begin{itemize}
	\item[(1)] $M_{0}=M$ and $\pi_{\alpha,\alpha}=id_{M_{\alpha}}$ for every $\alpha<\delta$,
	\item[(2)] If $\alpha+1<\delta$, $M_{\alpha}=\mathcal{J}^{U_{\alpha}}_{\eta_{\alpha}}$ is a Dodd-Jensen premouse at $\kappa_{\alpha}$ and $M_{\alpha+1}=\mathcal{J}^{U_{\alpha+1}}_{\eta_{\alpha+1}}$ is a Dodd-Jensen premouse at $\kappa_{\alpha+1}$, then $M_{\alpha+1}$ is the transitive collapse of the ultrapower of $M_{\alpha}$ by $U_{\alpha}$ with $\pi_{\alpha,\alpha+1}:M_{\alpha}\longrightarrow M_{\alpha+1}$ the corresponding $\Sigma_1$-elementary embedding, $\pi_{\alpha,\alpha+1}(\kappa_{\alpha})=\kappa_{\alpha+1}$ and 
	\[
	U_{\alpha+1}=\{[f]_{U_{\alpha}}:f\in  {}^{\kappa_{\alpha}}M_{\alpha}\cap M_{\alpha}\wedge \{\delta<\kappa_{\alpha}:f(\delta)\in U_{\alpha}\}\in U_{\alpha}\}.
	\]
	\item[(3)] If $\gamma<\delta$ is a limit ordinal, then $M_{\gamma}$ together with the $\Sigma_1$-elementary embeddings $\pi_{\alpha,\gamma}:M_{\alpha}\longrightarrow M_{\gamma}$ for every $\alpha<\gamma$ is the direct limit of the directed system $\langle M_{\alpha},\pi_{\alpha,\beta}:\alpha\leq\beta<\gamma\rangle$. 
\end{itemize}

To secure the existence of the transitive collapse of an ultrapower, it needs to be well-founded. If at step $\alpha$ of the iteration the resulting ultrapower isn't well-founded, the iteration halts and the Dodd-Jensen premouse doesn't have an iteration of length $\alpha$.

\begin{definition}\label{definition:dj-mouse} A Dodd-Jensen premouse is \emph{iterable} if it has a linear iteration of length $\delta$ for every ordinal $\delta$. $M$ is a \emph{Dodd-Jensen mouse} if it is an iterable Dodd-Jensen premouse.
\end{definition}

The iterability of Dodd-Jensen premice is witnessed by any transitive structure large enough to see the well-foundedness of their $\omega_1$-first iterated ultrapowers:

\begin{theorem}[{\cite[Theorem 2.7]{koepke1988some}}]\label{theorem:iterability-iff-iterability-in-a-large-enough-transitive-model} Let $M$ be a Dodd-Jensen premouse and let $H$ be a transitive model of a sufficiently large finite part of $ZFC$ such that $M\in H$ and $\omega_{1}^{V}\subseteq H$. Then $M$ is iterable if and only if $H\vDash``M \text{ is iterable}"$.
\end{theorem} 

Moreover, as the following shows, Dodd-Jensen mice can be compared via their corresponding iterates. As customary, if a Dodd-Jensen mouse $M$ is the initial segment of a Dodd-Jensen mouse $N$, we write $M\trianglelefteq N$.

\begin{theorem}[{\cite[Theorem 2.12]{koepke1988some}}]\label{theorem:comparison-of-dj-mice} For every two Dodd-Jensen mice $M,N$, there are iterates $M_{\delta}$ and $N_{\delta'}$ such that either $M_{\delta}\trianglelefteq N_{\delta'}$ or $N_{\delta'}\trianglelefteq M_{\delta}$. 
\end{theorem}

The lower part of a Dodd-Jensen mouse remains fixed throughout its iterations. Because any two Dodd-Jensen mice have iterates such that one is an initial segment of the other, the lower parts of the Dodd-Jensen mice are ordered by inclusion. This fact leads naturally to the definition of the Dodd-Jensen core model:

\begin{definition} The \emph{Dodd-Jensen core model} is the structure
	\[
	K^{\mathsf{DJ}}:=L\cup\bigcup\{lp(M):M\textit{ is a Dodd-Jensen mouse}\}.
	\]
\end{definition}

Recall that $0^\sharp$ can be seen as the first mouse (see, e.g., \cite[Lemma 35.7]{Jech}). Then, if $0^{\sharp}$ doesn't exist, the Dodd-Jensen core model is $L$; and if it does, then $L\subsetneq K^{\mathsf{DJ}}$. 

\begin{theorem}[Dodd-Jensen]\text{}\label{theorem:dodd-jensen-core-model-properties}
	\begin{enumerate-(1)-r}
	    \item $K^{\mathsf{DJ}}\vDash\mathsf{ZFC+GCH}$.\label{theorem:dodd-jensen-models-zfc-gch}
        \item $K^{\mathsf{DJ}}$ admits a $\Sigma_1$-definable well-order.\label{theorem:dodd-jensen-sigma-1-definable-well-order}
	\end{enumerate-(1)-r}
\end{theorem}

The well-ordering of the Dodd-Jensen core model is given by
\[
x\leq_{\mathsf{DJ}}y \leftrightarrow \exists M(M\textit{ is a Dodd-Jensen mouse}\wedge x,y\in lp(M) \wedge M\vDash``x\leq_{M}y"),
\]
where $\leq_{M}$ denotes the constructibility well-order of the mouse $M$, which is uniformly $\Sigma_{1}(M)$ (see \cite[p.186]{koepke1988some}).

\begin{definition}\label{def:covering-properties}
	An inner model $M$ has the \emph{full covering property} if for every uncountable set of ordinals $X$ in $V$ there is a $Y\in M$ such that $X\subseteq Y$ and $|Y|=|X|$. It has the \emph{weak covering property} if it computes correctly the successor of singular cardinals, i.e., if for every singular cardinal $\lambda$, $(\lambda^{+})^{M}=\lambda^{+}$.
\end{definition}

It is a well known fact that any inner model satisfying the axiom of choice and the full covering property also satisfies the weak covering property. The following gives the anti-large cardinal assumption under which the Dodd-Jensen core model exhibits these properties:

\begin{theorem}[Dodd-Jensen's Covering Theorem for $K^{\mathsf{DJ}}$]\label{cvthmkdj} If there is no inner model with a measurable cardinal, then $K^{\mathsf{DJ}}$ has the full and weak covering properties.
\end{theorem}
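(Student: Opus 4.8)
The statement is the Dodd--Jensen covering lemma for the core model below a measurable; in a paper aimed at descriptive set theorists the natural move is to cite it, but here is how the argument is organised. The plan is to prove the \emph{full} covering property and then appeal to the remark recorded just above the theorem: since $K^{\mathsf{DJ}}\vDash\mathsf{GCH}$, it is a model of choice, and for inner models of choice full covering implies weak covering. So it suffices to show that, if there is no inner model with a measurable cardinal, then every uncountable set $X$ of ordinals is contained in some $Y\in K^{\mathsf{DJ}}$ with $|Y|=|X|$.

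I would argue by contradiction. Suppose not, and fix a counterexample $X\subseteq\gamma$ with $\kappa:=|X|$ least and, among those, $\gamma$ least. Using the acceptability/condensation of the $J^{U}$-hierarchy together with the comparison of Dodd--Jensen mice (Theorem \ref{35}), build a continuous increasing chain $\langle N_{i}:i<\kappa\rangle$ of elementary submodels of a sufficiently tall hull of $K^{\mathsf{DJ}}$ --- an initial segment of $L$ when $0^{\sharp}$ fails, and otherwise a large enough Dodd--Jensen mouse together with its lower part --- each of size $\kappa$, each containing $\kappa\cup\{X\}$, with $X\subseteq\bigcup_{i<\kappa}N_{i}$. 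Let $\bar N_{i}$ be the transitive collapse and $\sigma_{i}\colon\bar N_{i}\to K^{\mathsf{DJ}}$ the anticollapse embedding; condensation guarantees that each $\bar N_{i}$ is again an initial segment of $L$ or a Dodd--Jensen mouse, hence an object of $K^{\mathsf{DJ}}$.

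The heart of the proof is the dichotomy on the maps $\sigma_{i}$. If, past some stage, the $\sigma_{i}$ fix enough ordinals that the $\bar N_{i}$ can be glued into a single set of $K^{\mathsf{DJ}}$ of size $\kappa$ containing $X$, we contradict the minimality of $X$. Otherwise some $\sigma_{i}$ has a critical point $\rho\le\kappa$ that is a cardinal of $K^{\mathsf{DJ}}$, and one extracts the derived ultrafilter $W=\{A\in\pow(\rho)^{K^{\mathsf{DJ}}}:\rho\in\sigma_{i}(A)\}$, which is $\rho$-complete and normal with respect to the functions lying in $K^{\mathsf{DJ}}$ (equivalently, in the relevant mouse). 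Iterating $W$ linearly through $\Ord$ in the style of Kunen --- with well-foundedness of the iterates secured by iterability of Dodd--Jensen mice and comparison (Theorems \ref{29} and \ref{35}) --- produces an inner model in which an image of $\rho$ is measurable, contradicting the hypothesis. Hence there is no counterexample.

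The point requiring genuine care, rather than bookkeeping, is the behaviour at a singular $\kappa$ of cofinality $\omega$: this is exactly where a failure of covering forces a Prikry-type sequence, and where the object derived from the collapse maps is not a single ultrafilter but, morally, a coherent sequence of measures on a cofinal set of $K^{\mathsf{DJ}}$-cardinals below $\kappa$; one must show that such a configuration still yields a bona fide inner model with a measurable cardinal. Making the condensation step precise when a measure $U$ is present (so that every hull collapses to an initial segment of $L$ or to a Dodd--Jensen mouse), and checking that $W$ is amenable enough over the correct structure for the linear iteration to be well-defined, are the remaining fine-structural inputs. These are precisely the contents of Dodd and Jensen's analysis, so at this point of the paper one cites \cite{koepke1988some} (and the original Dodd--Jensen sources) for the details.
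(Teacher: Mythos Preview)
The paper does not prove this theorem at all: it is stated as a classical result of Dodd and Jensen and used as a black box, exactly as you anticipated in your first sentence. Your instinct to simply cite it is therefore the correct match.

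Your accompanying sketch is a reasonable high-level outline of the Dodd--Jensen argument, but one point is slightly off. The Prikry-type obstruction you describe at singular $\kappa$ of cofinality $\omega$ --- where the derived object is a coherent sequence of measures rather than a single ultrafilter, and a Prikry sequence appears as a third alternative --- belongs to the covering analysis for $L[U]$ (this is precisely the trichotomy the paper records later as the Dodd--Jensen Covering Theorem for $L[U]$), not to the $K^{\mathsf{DJ}}$ covering lemma. For $K^{\mathsf{DJ}}$ the dichotomy is clean: either covering holds, or the fine-structural analysis of the collapse maps produces a genuine inner model with a measurable cardinal, with no Prikry alternative. The countable-cofinality case still requires care in the original proof, but the difficulty there is fine-structural (controlling the projecta and ensuring the derived mouse is iterable), not the emergence of a Prikry sequence.
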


\begin{remark}
 It follows from Theorem \ref{theorem:dodd-jensen-core-model-properties}\ref{theorem:dodd-jensen-sigma-1-definable-well-order} and Proposition \ref{proposition:toy-example} that in $K^{DJ}$ there is a $\Pi_1(\lambda)$-subset without the $\lambda\hyp\mathsf{PSP}$. Because of Theorem \ref{cvthmkdj}, we have the following strengthening of Corollary \ref{corollary:toy-example-if-0-sharp-pi2-set-without-psp}: if there is no inner model with a measurable cardinal, then there is a $\Pi_2(\lambda)$-definable subset of ${}^{\lambda}2$ without the $\lambda\hyp\mathsf{PSP}$. To see this, let $A^{V}_{K^{\mathsf{DJ}}}$ be the set $$\{x\in \mathsf{WO}_{\lambda}\cap K^{\mathsf{DJ}}:\forall y\in K^{\mathsf{DJ}}((\lambda,\in_{x})\cong(\lambda,\in_{y})\rightarrow x\leq_{{\mathsf{DJ}}}y)^{K^{\mathsf{DJ}}}\},$$
and proceed exactly as the equivalent result for $L$ in the previous section. Here, one only has to prove that $x\in K^{\mathsf{DJ}}$ is $\Sigma_1$. But this is clear: $x\in K^{\mathsf{DJ}}$ if and only if there exists a Dodd-Jensen mouse $M$ such that $x\in lp(M)$.
\end{remark}

We now turn our attention to the descriptive version of this problem. As shown by Dimonte and Motto Ros, in $L$ it is possible to construct a $\lambda$-coanalytic set that doesn't have the $\lambda\hyp\mathsf{PSP}$. 
In this section, we show that the same occurs in the Dodd-Jensen core model. In fact, we prove a slightly stronger result: namely, that in $K^{\mathsf{DJ}}$ there exists a (lightface) $\lambda\hyp\Pi^1_1$ subset of ${}^{\lambda}2$ without the $\lambda\hyp\mathsf{PSP}$. We will need the following preliminary lemmas:

\begin{lemma}\label{32}
	For every $x,y\in{}^{\lambda}2\cap K^{\mathsf{DJ}}$ there exists a Dodd-Jensen mouse $M$ of size $\lambda$ with $x,y\in lp(M)$.
\end{lemma}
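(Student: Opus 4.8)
For every $x,y \in {}^{\lambda}2 \cap K^{\mathsf{DJ}}$ there is a Dodd-Jensen mouse $M$ of size $\lambda$ with $x, y \in lp(M)$.

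The plan is to start from the definition of $K^{\mathsf{DJ}}$ together with the fact that $x, y$ are bounded subsets of $\lambda$ (they are elements of $V_\lambda$, hence lie in $V_\kappa$ for suitable $\kappa$), and then obtain a mouse of size exactly $\lambda$ by a condensation/Löwenheim–Skolem argument inside a large enough mouse. Here is how I would carry this out in steps.

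First, since $x, y \in K^{\mathsf{DJ}}$, by the definition of the Dodd-Jensen core model there are Dodd-Jensen mice $M_x, M_y$ with $x \in lp(M_x)$ and $y \in lp(M_y)$. By the comparison theorem (Theorem \ref{35}), the mice $M_x$ and $M_y$ can be compared, and more usefully, by iterating both up to a common ordinal we may assume there is a single Dodd-Jensen mouse $N$ with $M_x \trianglelefteq N$ and $M_y \trianglelefteq N$ (take $N$ to be whichever of suitable iterates is longer; note that iterating does not change the lower part below the critical point, so $x, y \in lp(N)$ still holds — since $x, y$ are bounded in $\lambda \le \kappa_N$). Thus there is a single Dodd-Jensen mouse $N$, say $N = \mathcal{J}^{U}_{\eta}$ at some uncountable $\kappa$, with $x, y \in lp(N) = N \cap V_\kappa$. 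The problem is that $N$ may be far larger than $\lambda$; we must cut it down.

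Second, to produce a mouse of size $\lambda$, take an elementary substructure. Let $H$ be a transitive model of a large finite fragment of $\mathsf{ZFC}$ with $N \in H$ and $\omega_1^V \subseteq H$ (such $H$ exists). Inside $H$, form a Skolem hull $X \prec (\mathcal{J}^{U}_{\eta}, \in, U)$ of size $\lambda$ containing $x$, $y$, $\kappa$, and $\lambda$ itself, as well as $\lambda \cup \{x, y\}$ — more precisely arrange $\lambda + 1 \subseteq X$ so that the transitive collapse does not move ordinals below $\lambda$, and in particular does not move $x$, $y$ (which are subsets of $\lambda$, so are fixed pointwise-on-elements under the collapse). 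Let $\bar{N} = \mathcal{J}^{\bar{U}}_{\bar{\eta}}$ be the transitive collapse of $X$; then $|\bar N| = \lambda$, and $x, y \in lp(\bar N)$ since the collapse fixes $x$, $y$ and fixes $\kappa$ (as $\kappa \in X$ and everything below $\kappa$ that matters is in $X$ — here one uses that $\kappa$ is the measurable of $N$ and $\lambda \le \kappa$; if $\lambda = \kappa$ one argues slightly more carefully, but $\kappa$ is still not moved because $\kappa \subseteq X$). By condensation for the Jensen hierarchy (or directly: $X \prec N$ and $N \vDash$ ``$U$ is a normal measure on $\kappa$''), $\bar N$ is again a Dodd-Jensen premouse, at $\bar\kappa$ = the image of $\kappa$.

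Third and last, I must check that $\bar N$ is \emph{iterable}, i.e., is a Dodd-Jensen mouse, not merely a premouse. This is where the hypothesis $\omega_1^V \subseteq H$ earns its keep: by Theorem \ref{29}, $N$ is iterable iff $H \vDash$ ``$N$ is iterable''. Elementarity of $X$ in $N$ is not directly what is needed; rather, one argues that $\bar N$, being the transitive collapse of an elementary substructure of an iterable premouse (with $\omega_1 + 1 \subseteq X$, so that $\bar N$ correctly computes the well-foundedness of its first $\omega_1$ iterated ultrapowers), is iterable — again by Theorem \ref{29}, applied now to $\bar N$ inside a suitable transitive $H'$ (which we can take to be $H$ itself, or a collapse thereof, still containing $\omega_1^V$). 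So $\bar N$ is a Dodd-Jensen mouse of size $\lambda$ with $x, y \in lp(\bar N)$, and we are done.

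The main obstacle is precisely this last point: \textbf{preserving iterability under the hull/collapse}. One must ensure the hull $X$ contains enough of $\omega_1^V$ (namely $\omega_1^V + 1 \subseteq X$) so that the collapsed structure still "sees" the well-foundedness of sufficiently many iterated ultrapowers, allowing Koepke's criterion (Theorem \ref{29}) to certify iterability of $\bar N$ from the iterability of $N$. The size bookkeeping — getting $|\bar N| = \lambda$ while keeping $x, y, \kappa$ unmoved — is routine given that $\lambda$ is a strong limit (so $|\lambda \cup \{\text{countably many Skolem terms}\}| = \lambda$), but it must be done in tandem with the $\omega_1^V \subseteq X$ requirement, which is consistent since $\omega_1^V \le \lambda$.
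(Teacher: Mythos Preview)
Your approach is essentially correct and differs from the paper's mainly in the order of operations: you coiterate first (obtaining a single, possibly very large mouse $N$) and then take a hull of size $\lambda$, whereas the paper first takes hulls of $M_x$ and $M_y$ separately to get mice $M'_x, M'_y$ of size $\lambda$, and only then coiterates---using the standard bound that comparison of size-$\lambda$ mice terminates in fewer than $\lambda^{+}$ steps to conclude that the resulting common iterate still has size $\lambda$. Both orders work; the paper's choice makes the size bookkeeping slightly more transparent, while yours needs only one hull instead of two.

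Two minor corrections. First, $x,y\in{}^{\lambda}2$ are \emph{not} bounded subsets of $\lambda$ nor elements of $V_\lambda$; they are (essentially) subsets of $\lambda\times 2$, hence elements of $V_{\lambda+1}$. This does not damage your argument: since $\lambda\subseteq X$, every element of $x$ and of $y$ lies in $X$ and is fixed by the collapse, so $x,y$ are fixed; and $\bar\kappa=\pi(\kappa)>\lambda$ (because $\lambda+1\subseteq X$ and $\kappa>\lambda$) is all you need to conclude $x,y\in lp(\bar N)$. You do not need $\kappa$ itself to be fixed, and in general it will not be. Second, the iterability of $\bar N$ follows more directly than your third step suggests: the inverse of the collapse is an elementary embedding $\bar N\to N$ into an iterable premouse, and it is standard that iterations of $\bar N$ can be copied along this map to iterations of $N$, so well-foundedness transfers back. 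The paper handles this by simply saying ``by elementarity''; Theorem~\ref{29} is invoked later in the paper for coding purposes, not for this lemma.
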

\begin{proof}
	Let $x,y\in{}^{\lambda}2\cap K^{\mathsf{DJ}}$. Since $x,y\in K^{\mathsf{DJ}}$, there exist two Dodd-Jensen mice $M_{x}$ and $M_{y}$ at $\kappa_x$ and $\kappa_y$ respectively such that $x\in lp(M_{x})$ and $y\in lp(M_{y})$. Let $M_{x}'$ be the collapse of $Hull^{M_{x}}(\lambda\cup\{x,\kappa_x\})$ and let $\pi_{x}^{-1}$ be the collapse function. By elementarity, $M'_x$ is a Dodd-Jensen mouse at $\pi_{x}^{-1}(\kappa_x)$ of size $\lambda$ and $x\in lp(M'_x)$. To see the latter, note that $\pi^{-1}_x(x)=x$ and $rank^{M'_x}(x)$ is still below $rank^{M'_x}(\pi_{x}^{-1}(\kappa_x))$. Analogously, we let $M'_{y}$ be the collapse of $Hull^{M_{y}}(\lambda\cup\{y,\kappa_y\})$ with $\pi_{y}^{-1}$ the corresponding collapse function, so that $M'_y$ is a Dodd-Jensen mouse at $\pi_{y}^{-1}(\kappa_y)$ of size $\lambda$ with $y\in lp(M'_y)$.  By Theorem \ref{theorem:comparison-of-dj-mice}, by iterating both $M'_{x}$ and $M'_{y}$ we get two Dodd-Jensen mice $\tilde{M}_{x}$ and $\tilde{M}_{y}$ such that, with no loss of generality, $\tilde{M}_{x}\trianglelefteq\tilde{M}_{y}$. The coiteration successfully removes all disagreements\footnote{This is inner model theory jargon. We simply mean that the iteration has finally produced two models such that one is the initial segment of the other.} in $\nu<\max(|M'_x|,|M'_y|)^{+}=\lambda^{+}$ steps (see, e.g., the proof of \cite[Lemma 4.8]{steel2016introduction}) and it also holds that $|\tilde{M}_i|=|M'_i||\nu|$ for $i\in\{x,y\}$. It follows that $|\tilde{M}_{y}|=\lambda$. Moreover, $x,y\in lp(\tilde{M}_{y})$ because the lower parts of $M'_x$ and $M'_y$ remain unchanged throughout the iteration. To finish, we let $\tilde{M}_y$ to be our desired $M$. 
\end{proof}

Let now $x,y\in{}^{\lambda}2\cap K^{\mathsf{DJ}}$ be such that $x\leq_{\mathsf{DJ}}y$ and recall that $x\leq_{\mathsf{DJ}}y$ if and only if there exists a Dodd-Jensen mouse $M$ such that $x,y\in lp(M)$ and $M\vDash ``x\leq_{M}y"$. Arguing as in the lemma above, the transitive collapse $M'$ of $Hull^{M}(\lambda\cup\{x,y\})$ is a Dodd-Jensen mouse of size $\lambda$ with both $x,y\in lp(M')$ such that $M'\vDash``x\leq_{M'}y"$. Note as well that the Dodd-Jensen mouse $M'$ contains $\lambda$. This proves the following:

\begin{corollary}\label{corollary:x-less-than-y-iff-less-in-dj-mouse} For every $x,y\in{}^{\lambda}2\cap K^{\mathsf{DJ}}$, $x\leq_{\mathsf{DJ}}y$ if and only if there exists a Dodd-Jensen mouse $M$ of size $\lambda$ with $\lambda\in M$ such that $x,y\in lp(M)$ and $M\vDash ``x\leq_{M}y"$. 
\end{corollary}

\begin{remark}
	It readily follows from the proof above that $x\leq_{DJ}y$ if and only if $M\vDash ``x\leq_{M}y"$ \emph{for all} Dodd-Jensen mice $M$ with $x,y\in lp(M)$. That is, $\leq_{\mathsf{DJ}}$ is actually $\Delta_1$.  
\end{remark}

Let $M$ be a Dodd-Jensen mouse of size $\lambda$ and let $\theta$ be a cardinal large enough for $M\in H_{\theta}$ and $H_{\theta}\vDash \mathsf{ZF^{-}}$ to hold. The collapse of $Hull^{H_{\theta}}(M\cup\{M\})$ is then a transitive $\mathsf{ZF^{-}}$-model of size $\lambda$ containing $M$ as an element. Therefore, every Dodd-Jensen mouse of size $\lambda$ belongs to a transitive $\mathsf{ZF^{-}}$-model $H$ of size $\lambda$. For our purposes, it might be convenient to have a model satisfying only a finite fragment of $\mathsf{ZF^{-}}$. An argument as above secures the existence of such a model. In the context of this chapter, the infinite fragment of $\mathsf{ZF^{-}}$ that we require the collapse of $Hull^{H_{\theta}}(M\cup\{M\})$ to satisfy is that needed to witness the iterability of its Dodd-Jensen mice elements. To sum up:

\begin{lemma}\label{l34b} For every Dodd-Jensen mouse $M$ of size $\lambda$ there is a transitive 
	structure $H$ of size $\lambda$ such that $M\in H$ and which models a finite fragment of $\mathsf{ZF^{-}}$ enough to witness the iterability of its Dodd-Jensen mice.
\end{lemma}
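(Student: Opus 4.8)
The plan is to run the Löwenheim--Skolem and transitive-collapse argument already sketched in the paragraph preceding the statement, taking care of the two hypotheses of Theorem \ref{29} --- that the model be transitive and contain $\omega_1^V$ --- and of the requirement that $M$ itself, not merely an isomorphic copy, end up as an element of $H$.

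First I would fix, once and for all, a finite fragment $\Phi$ of $\mathsf{ZF}^-$ such that every transitive model of $\Phi$ containing $\omega_1^V$ correctly recognises the iterability of each Dodd--Jensen mouse among its elements; such a uniform $\Phi$ exists because the ``sufficiently large finite part of $\mathsf{ZFC}$'' appearing in Theorem \ref{29} can be chosen independently of the premouse, and is implied by a finite fragment of $\mathsf{ZF}^-$. Then I would pick a regular cardinal $\theta$ with $M \in H_{\theta}$ (since $M$ is transitive of size $\lambda$ one may simply take $\theta = \lambda^+$), so that $H_{\theta} \vDash \mathsf{ZF}^-$, and form the Skolem hull $X = Hull^{H_{\theta}}(M \cup \{M\} \cup \omega_1^V)$ with respect to a fixed family of definable Skolem functions for $H_{\theta}$, so that $X \prec H_{\theta}$. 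Since $|M \cup \{M\} \cup \omega_1^V| = \lambda$ and only countably many Skolem functions are involved, $|X| = \lambda$. Letting $\pi \colon X \to H$ be the transitive collapse, I would then verify that $H$ is as required.

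Concretely: $H$ is transitive of size $\lambda$ by construction. Because $M$ is transitive and $M \cup \{M\} \subseteq X$, a routine $\in$-induction gives $\pi \upharpoonright M = \id_M$ and $\pi(M) = M$, so $M \in H$; because $\omega_1^V$ is an ordinal contained in $X$, the same reasoning gives $\pi \upharpoonright \omega_1^V = \id$, hence $\omega_1^V \subseteq H$. From $X \prec H_{\theta} \vDash \mathsf{ZF}^-$ and $H \cong X$ it follows that $H \vDash \mathsf{ZF}^-$, in particular $H \vDash \Phi$, so by the choice of $\Phi$ and Theorem \ref{29}, $H$ witnesses the iterability of all of its Dodd--Jensen mice elements, $M$ among them. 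If one wants $H$ to model only a finite fragment of $\mathsf{ZF}^-$, one replaces $H_{\theta}$ here by a transitive set reflecting $\Phi$ together with the finitely many instances of Replacement and Comprehension needed to make the hull $\Phi$-elementary (Reflection Theorem), and argues verbatim.

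There is no genuinely hard step in this: it is a standard elementary-submodel argument. The only points that need a little attention are (i) remembering to throw $\omega_1^V$ into the generators of the hull so that Theorem \ref{29} is applicable to $H$ --- one could instead simply note that $\omega_1^V \subseteq M$ already, since $M$ is transitive and has more than $\omega_1$ ordinals --- and (ii) using the transitivity of $M$ to conclude that the collapse fixes $M$, so that $M$ itself, and not merely a copy, belongs to $H$.
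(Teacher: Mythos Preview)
Your argument is correct and is essentially the same Skolem-hull-and-collapse construction that the paper sketches in the paragraph immediately preceding the lemma: take $H_\theta$ for a suitable $\theta$, form $Hull^{H_\theta}(M\cup\{M\})$, and collapse. You are simply a bit more explicit about why the collapse fixes $M$ and why $\omega_1^V$ ends up inside $H$; as you yourself observe, throwing $\omega_1^V$ into the generators is unnecessary since $\omega_1^V\subseteq M$ already (indeed $\lambda$, being a strong limit of countable cofinality, exceeds $\omega_1$, and $M=J_\alpha^U$ with $|\alpha|=\lambda$), which is exactly how the paper handles this point in the sentence following the lemma.
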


If $M$ and $H$ are as above, then $\omega_1^{V}\subseteq H$ because $\lambda$ is assumed to be uncountable, so, by Theorem \ref{theorem:iterability-iff-iterability-in-a-large-enough-transitive-model}, $M$ is iterable if and only if $H\vDash``M\text{ is iterable}"$.

\subsection{Coding Dodd-Jensen mice} Let $x,y\in{}^{\lambda}2\cap K^{\mathsf{DJ}}$. By Corollary \ref{corollary:x-less-than-y-iff-less-in-dj-mouse}, $x\leq_{\mathsf{DJ}}y$ if and only if there exists a Dodd-Jensen mouse $M$ of size $\lambda$ such that $x,y\in lp(M)$ and $M\vDash ``x\leq_{M}y"$. Equivalently, $x\leq_{\mathsf{DJ}}y$ if and only if there is a code $z\in{}^{\lambda}2$ for a well-founded and extensional structure $(\lambda,\in_{z})$ whose (bounded) transitive collapse $tr(\lambda,\in_{z})$ is a Dodd-Jensen mouse such that $x,y\in lp(tr(\lambda,\in_{z}))$ and $tr(\lambda,\in_{z})\vDash``x\leq_{tr(\lambda,\in_{z})}y"$. 

Note that a Dodd-Jensen premouse is a structure of an augmented language $\mathcal L_{\in}(\dot{U})$ with a unary predicate. Let $((z)_1,(z)_2)\in{}^{\lambda}2\times{}^{\lambda}2$ be such that $(z)_1$ is the code for a bounded, extensional and well-founded structure $(\lambda,\in_{(z)_1})$. By defining $U_{(z)_2}=\{\alpha<\lambda:(z)_2(\alpha)=1\}$, we get that $((z)_1,(z)_2)$ codes an $\mathcal{L}_{\in}(\dot{U})$-model $(\lambda,\in_{(z)_1},U_{(z)_2})$\footnote{It is possible to code a structure $(M,E,U)$ with a single element $z\in{}^{\lambda}2$ by letting $E=\{(\alpha,\beta):z(\langle \alpha,\beta\rangle)=1\}$ and $U=\{\beta:z(\beta)=0\}$, but this way things become unnecessarily cumbersome.}.

By $\mathsf{PM^{DJ}}(\lambda)$ we denote the set of pairs in $\mathsf{BC}_{\lambda}\times{}^{\lambda}2$ coding Dodd-Jensen premice of size $\lambda$, that is:
\begin{equation*}
	\begin{split}
		\mathsf{PM^{DJ}}(\lambda):=&\{((z)_1,(z)_2)\in \mathsf{BC}_{\lambda}\times{}^{\lambda}2:(\lambda,\in_{(z)_1},U_{(z)_2})\vDash``\exists\kappa(U_{(z)_2}\text{ is}\\&\text{a normal measure on } \kappa)\wedge V=L[U_{(z)_2}]"\},
	\end{split}
\end{equation*}
where the last condition together with $tr(\lambda,\in_{(z)_1},U_{(z)_2})$ being transitive makes $tr(\lambda,\in_{(z)_1},U_{(z)_2})$ a structure of the form $\mathcal{J}^{U_{(z)_2}}_{\alpha}$, as the following tells (structures of the form $(\lambda,\in_{(z)_1},U_{(z)_2})$ as defined above already satisfy that, for all $x\in\lambda$, $x\in U_{(z)_2}$ if and only if $(\lambda,\in_{(z)_1},U_{(z)_2})\vDash x\in U_{(z)_2}$):

\begin{lemma}[See, e.g., {\cite[Lemma 5.28]{schindler2014set}}]\label{l39} Let $M$ be a transitive model. Then, $M\vDash V=L[\dot{U}]$ if and only if $M=J_{\alpha}[\dot{U}]$ for some $\alpha$ and $\dot{U}$, where $x\in\dot{U}$ if and only if $M\vDash x\in\dot{U}$ for all $x\in M$. 
\end{lemma}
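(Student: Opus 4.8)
The statement to prove is Lemma \ref{l39}, the classical characterization that a transitive model satisfies $V=L[\dot U]$ iff it is of the form $J_\alpha[\dot U]$.

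\textbf{Proof plan.} The plan is to prove both directions, with the forward direction being the substantive one. For the easy direction, suppose $M = J_\alpha[\dot U]$ for some ordinal $\alpha$ and some predicate $\dot U$, where we read off the predicate from $M$ by setting $x \in \dot U$ iff $M \vDash x \in \dot U$. Then $M$ is a level of the relativized Jensen hierarchy, and since the construction of $J_\beta[\dot U]$ is absolute to transitive models closed under it (the rudimentary functions relativized to $\dot U$ are absolute, and the hierarchy is defined by a simple recursion), $M$ satisfies ``every set lies in some $J_\beta[\dot U]$'', i.e. $M \vDash V = L[\dot U]$. The only care needed is that the predicate used inside $M$ agrees with the external one, which holds by the stipulated reading of $\dot U$ from $M$; one checks by induction on $\beta \le \alpha$ that $(J_\beta[\dot U])^M = J_\beta[\dot U \cap M]$.

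For the forward direction, suppose $M$ is transitive and $M \vDash V = L[\dot U]$, and let $\dot U^* = \{x \in M : M \vDash x \in \dot U\}$ be the externally-defined predicate. First I would argue that $M \vDash V = L[\dot U]$ means precisely that $M$ believes every set is constructible from $\dot U$, i.e. $M = (L[\dot U])^M$. Then the standard point is that the relativized constructible hierarchy is absolute: for each ordinal $\beta \in M$, the structure $(J_\beta[\dot U])^M$ computed inside $M$ equals the true $J_\beta[\dot U^* \cap V_{?}]$ — more precisely $J_\beta[\dot U^*]$, where here $\dot U^*$ is restricted appropriately at each stage — because the defining recursion (using rudimentary closure relativized to the predicate, together with taking the predicate's trace at each level) is $\Sigma_1$ and hence absolute between transitive models of enough of $\mathsf{ZF}^-$. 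Since $M$ is transitive, $\mathrm{Ord} \cap M$ is an ordinal $\alpha$, and $M \vDash V = L[\dot U]$ forces $M = \bigcup_{\beta < \alpha}(J_\beta[\dot U])^M = \bigcup_{\beta<\alpha} J_\beta[\dot U^*] = J_\alpha[\dot U^*]$. This gives the desired form with the $\dot U$ of the lemma equal to $\dot U^*$.

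\textbf{Main obstacle.} The delicate point is bookkeeping around the predicate: in the relativized hierarchy $J_\beta[\dot U]$ one conventionally adds, at each level, not the full external predicate but its intersection with the material constructed so far, so one must be careful that ``the $\dot U$ read off from $M$'' and ``the sequence of traces used in building the $J_\beta$'s inside $M$'' cohere. This is exactly the content of the cited \cite[Lemma 5.28]{schindler2014set}, and since the excerpt permits citing earlier results, I would simply invoke that reference for the precise inductive verification of absoluteness of the $J_\beta[\dot U]$ construction, and restrict my own argument to assembling the two directions as above. No nontrivial calculation is required beyond this standard absoluteness fact.
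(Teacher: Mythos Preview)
Your sketch is essentially the standard argument and is correct, but there is nothing to compare against: the paper does not prove Lemma~\ref{l39} at all. It is stated as a background fact with a citation to \cite[Lemma~5.28]{schindler2014set} and used immediately afterwards without further justification. Your plan (absoluteness of the $J_\beta[\dot U]$ recursion plus the observation that $\mathrm{Ord}\cap M$ is an ordinal $\alpha$, so $M=\bigcup_{\beta<\alpha}J_\beta[\dot U^*]=J_\alpha[\dot U^*]$) is exactly the standard textbook proof you would find at that reference, so in effect you are reproducing what the paper merely cites.
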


\begin{lemma}\label{SPM} $\mathsf{PM^{DJ}}(\lambda)$ is \emph{(lightface)} $\lambda\hyp\Pi^1_1$ in ${}^{\lambda}2\times{}^{\lambda}2$.
\end{lemma} 
\begin{proof} It follows from Proposition \ref{prop:BC-is-pi-11} and Lemma \ref{lemma:complexity_of_satisfaction} that the set $\{((z)_1,(z)_2)\in\mathsf{BC}_\lambda\times{}^{\lambda}2:(\lambda,\in_{(z)_1},U_{(z)_2})\vDash\varphi\}$
is $\lambda\hyp\Pi^1_1$ for every $\mathcal{L}(\dot{U})$-formula $\varphi$.
\end{proof}

We denote by $\mathsf{M^{DJ}}(\lambda)$ the set of codes for Dodd-Jensen mice. That is:
\begin{equation*}
	\begin{split}
		\mathsf{M^{DJ}}(\lambda):=&\{((z)_{1},((z)^1_2,(z)^2_2))\in\mathsf{BC}_{\lambda}\times\mathsf{PM^{DJ}}(\lambda):\\&\,((z)^1_2,(z)^2_2)\in tr(\lambda,\in_{(z)_1})\wedge tr(\lambda,\in_{(z)_{1}})\vDash\mathcal{T}\\&\wedge tr(\lambda,\in_{(z)_{1}})\vDash``tr(\lambda,\in_{(z)^1_2},U_{(z)^2_2})\text{ is iterable}"\},
	\end{split}
\end{equation*}
where $\mathcal{T}$ is a finite fragment of $\mathsf{ZF^{-}}$ large enough to witness the iterability of Dodd-Jensen premice. The existence of a structure such as the one coded by $(z)_{1}$ is secured by Lemma \ref{l34b}.

\begin{lemma}\label{PM}  $\mathsf{M^{DJ}}(\lambda)$ is \emph{(lightface)} $\lambda\hyp\Pi^1_1$ in ${}^\lambda 2\times{}^{\lambda}2$.
\end{lemma}
\begin{proof}
    By the previous, coding a Dodd-Jensen premouse is $\lambda\hyp\Pi^1_1$. The rest follows from Proposition \ref{prop:BC-is-pi-11} and Lemma \ref{lemma:complexity_of_satisfaction}, as above.
\end{proof}

A consequence of the previous two lemmas is the following:

\begin{proposition}\label{lemma:order-and-cantor-are-sigma-1-2} Both ${}^{\lambda}2\cap K^{\mathsf{DJ}}$ and $\leq_{\mathsf{DJ}}\upharpoonright({}^{\lambda}2\times{}^{\lambda}2)\cap K^{\mathsf{DJ}}$ are \emph{(lightface)} $\lambda\hyp\Sigma^1_2$. 
\end{proposition}
\begin{proof} We have already seen that $x\in{}^{\lambda}2\cap K^{\mathsf{DJ}}$ if and only if there is a Dodd-Jensen mouse $M$ of size $\lambda$ such that $x\in lp(M)$. That is, $x\in{}^{\lambda}2\cap K^{\mathsf{DJ}}$ if and only if $\exists z\in\mathsf{M^{\mathsf{DJ}}}(\lambda)(x\in lp(tr(\lambda,\in_{(z)^1_2}))$. Note that $x\in lp(tr(\lambda,\in_{(z)^1_2}))$ can be written as $x\in tr(\lambda,\in_{(z)^1_2})\wedge tr(\lambda,\in_{(z)^1_2})\vDash``x\in V_{\bigcup(z)^2_2}"$, where $(z)^{2}_{2}$ is the code of the measure as in Lemma \ref{SPM}. The first statement defines a $\lambda\hyp\Pi^1_1$ set by Lemma \ref{lemma:y-in-transitive-closure-of-lambda-inx-is-pi11}, while the second defines a $\lambda\hyp\Delta^1_1$ set by Lemma \ref{lemma:complexity_of_satisfaction}. By Lemma \ref{PM}, $z\in\mathsf{M^{DJ}}(\lambda)$ is $\lambda\hyp\Pi^1_1$, so $\exists z\in\mathsf{M^{DJ}}(\lambda)$ is $\lambda\hyp\Sigma^1_2$. Altogether, the sentence defines a (lightface) $\lambda\hyp\Sigma^1_2$ set. 

For the second statement, simply note that if $x,y\in{}^{\lambda}2\cap K^{\mathsf{DJ}}$, then $x\leq_{D}y$ if and only if $\exists z\in\mathsf{M^{DJ}}(\lambda)(x,y\in lp(tr(\lambda,\in_{(z)^1_{2}}))\wedge tr(\lambda,\in_{(z)^{1}_{2}})\vDash``x\leq y")$, where $\leq$ stands for the constructibility order of the mouse $tr(\lambda,\in_{(z)^{1}_{2}})$. As before, $x,y\in lp(tr(\lambda,\in_{(z)^{1}_{2}}))$ is $\lambda\hyp\Pi^1_1$. Since the order of every mice $M$ is uniformly $\Sigma_1(M)$, $tr(\lambda,\in_{(z)^{1}_{2}})\vDash``x\leq y"$ defines a $\lambda\hyp\Delta^1_1$ set by Lemma \ref{lemma:complexity_of_satisfaction}. The rest goes as before.
\end{proof}

\begin{remark}\label{remark:y-in-lower-part-of-x-is-pi-1-1}
    For later use, it is worth noting that $\{(x,y) : x \in lp(tr(\lambda,\in_y))\}$ is $\lambda\hyp\Pi^1_1$, as showed in the proof above.
\end{remark}

From now on, we write $x \sim y$ to denote $(\lambda,\in_x) \cong (\lambda,\in_y)$.
 The set $\{(x,y)\in{}^{\lambda}2\times{}^{\lambda}2:x\sim y\}$ is $\lambda\hyp\Sigma^1_1$, for $(\lambda,\in_x)\cong(\lambda,\in_y)$ if and only if there exists an order preserving bijection $f\in{}^{\lambda}\lambda$.
By the previous, the set $\leq_{\mathsf{DJ}}\upharpoonright({}^{\lambda}2\times{}^{\lambda}2)\cap K^{\mathsf{DJ}}$ is $\lambda\hyp\Sigma^{1}_{2}$. Therefore the set $$A:=\{x\in\mathsf{WO}_{\lambda}\cap{}^{\lambda}2:\forall y\in{}^{\lambda}2(y\leq_{\mathsf{DJ}}x\rightarrow 
y\nsim x))\},$$ is $\lambda\hyp\Pi^{1}_{2}$. Note that $A$ cannot have the $\lambda\hyp\mathsf{PSP}$ in $K^{\mathsf{DJ}}$: by its definition, for every $\alpha<\lambda^{+}$ there is a unique $x\in{}^{\lambda}2$ coding a well-order on $\lambda$ of order type $\alpha$, so $|A|=\lambda^{+}$ (for $\mathsf{GCH}$ holds in $K^{\mathsf{DJ}}$). Then, if $A$ had the $\lambda\hyp\mathsf{PSP}$, there would be an embedding $\pi$ of ${}^{\lambda}2$ into $A$ and the $\lambda$-analytic set $\pi[{}^{\lambda}2]$ would be of size $\lambda^{+}$. Consequently, $\{ot(<_{x}):x\in\pi[{}^{\lambda}2]\}$ would be an unbounded $\lambda$-analytic set in $\lambda^{+}$, contradicting the Boundedness Lemma. This already proves that there is a (lightface) $\lambda\hyp\Pi^1_2$ subset of ${}^{\lambda}2$ without the $\lambda\hyp\mathsf{PSP}$ in $K^{\mathsf{DJ}}$.

Because of how $\leq_{\mathsf{DJ}}$ is defined, if $x\in{}^{\lambda}2\cap K^{\mathsf{DJ}}$ is the $\leq_{\mathsf{DJ}}$-least element satisfying certain property $P$, then $x$ is the $\leq_{\mathsf{DJ}}$-least element in some Dodd-Jensen mouse $M$ with $x\in lp(M)$ such that $P(x)$. Conversely, if $x\in{}^{\lambda}2\cap K^{\mathsf{DJ}}$ is the $\leq_{M}$-least element in the lower part of some Dodd-Jensen mouse $M$ satisfying $P$, then it is the $\leq_{\mathsf{DJ}}$-least. Therefore, the set $A$ from the previous paragraph can be written instead as
\begin{equation*}
	\begin{split}
		\{x\in\mathsf{WO}_\lambda:&\exists z\in\mathsf{M^{DJ}}(\lambda)\forall y\in lp(tr(\lambda,\in_{z}))\cap{}^{\lambda}2\\&(tr(\lambda,\in_{z})\vDash``y<_{tr(\lambda,\in_{z})} x"\rightarrow y\nsim x)\},
	\end{split}
\end{equation*}
which is easily seen to be (lightface) $\lambda\hyp\Sigma^1_2$.

A trick of Solovay can be used to further lower the descriptive complexity of $A$. Let $z$ be the $\leq_{\mathsf{DJ}}$-least code for a structure $(\lambda,\in_{z})$ such that for all $y\in lp(tr(\lambda,\in_{z}))\cap{}^{\lambda}2$, if $tr(\lambda,\in_{z})\vDash``y\leq_{tr(\lambda,\in_{z})} x"$, then $y\nsim x$. To say that such $z$ is the $\leq_{\mathsf{DJ}}$-least is not $\lambda\hyp\Pi^1_1$, but it is $\lambda\hyp\Pi^1_1$ to say of some $z'\in\mathsf{PM^{DJ}}(\lambda)$ that $z'$ codes a Dodd-Jensen mouse $M_{z'}$ with $z\in lp(M_{z'})$ which witnesses that $z$ is indeed the $\leq_{\mathsf{DJ}}$-least. Again, to specify such $z'$ is not $\lambda\hyp\Pi^1_1$, so we pick a third code $z''\in\mathsf{PM^{DJ}}(\lambda)$ for a Dodd-Jensen mouse $M_{z''}$ with $z'\in lp(M_{z''})$ where $z'$ is indeed the $\leq_{\mathsf{DJ}}$-least, and so on, and to say that of such $z''$ is again $\lambda\hyp\Pi^1_1$. This way, we inductively produce a sequence $\langle z_{n}:n\in\omega\rangle$. But this sequence must be unique, so instead of \emph{there is a sequence $\langle z_{n}:n\in\omega\rangle$ such that...} we can say \emph{for all sequences of codes $\langle z_{n}:n\in\omega\rangle$ such that...}. This makes $A$ (lightface) $\lambda\hyp\Pi^1_1$. More formally:

\begin{theorem}\label{theorem:weak-covering-dj-implies-pi11-without-psp} If $(\lambda^{+})^{K^{\mathsf{DJ}}}=\lambda^{+}$, then there is a \emph{(lightface)} $\lambda\hyp\Pi^1_1$ subset of ${}^{\lambda}2$ without the $\lambda\hyp\mathsf{PSP}$. In particular, there is such a set in $K^{\mathsf{DJ}}$. 
\end{theorem}
\begin{proof}
 
    Let $A'$ be the set of $z\in{}^{\omega}({}^{\lambda}2)$ such that: 
    \begin{enumerate}
        \item $(z)_0$ is the code for an ordinal $\alpha<\lambda^{+}$,\label{theorem:weak-covering-dj-implies-pi11-without-psp-i}
        \item $((z)_1,((z)_2,(z)_3))\in\mathsf{M^{DJ}}(\lambda)$ and $(z)_0\in lp(tr(\lambda,\in_{(z)_{2}}))\cap {}^{\lambda}2$,\label{theorem:weak-covering-dj-implies-pi11-without-psp-ii}
        \item $(z)_{0}$ is the $\leq_{\mathsf{DJ}}$-least element of $lp(tr(\lambda,\in_{(z)_{2}}))\cap{}^{\lambda}2$ coding the ordinal $\alpha$ in \eqref{theorem:weak-covering-dj-implies-pi11-without-psp-i},\label{theorem:weak-covering-dj-implies-pi11-without-psp-iii}
        \item $((z)_{4},((z)_5,(z)_6))\in\mathsf{M^{DJ}}(\lambda)$ and $((z)_1,((z)_2,(z)_3))\in lp(tr(\lambda,\in_{(z)_{5}}))$,\label{theorem:weak-covering-dj-implies-pi11-without-psp-iv}
        \item $((z)_{1},((z)_2,(z)_3))$ is the $\leq_{\mathsf{DJ}}$-least in $lp(tr(\lambda,\in_{(z)_{5}}))$ satisfying \eqref{theorem:weak-covering-dj-implies-pi11-without-psp-iv}.\label{theorem:weak-covering-dj-implies-pi11-without-psp-v}
    \end{enumerate}
    And then, for every $n\in\omega$,
    \begin{enumerate}[resume]
        \item $((z)_{3n+4},((z)_{3n+5},(z)_{3n+6}))\in\mathsf{M^{DJ}}(\lambda)$,\label{theorem:weak-covering-dj-implies-pi11-without-psp-vi}
		\item $((z)_{3n+1},((z)_{3n+2},(z)_{3n+3}))\in lp(tr(\lambda,\in_{(z)_{3n+5}}))$, and\label{theorem:weak-covering-dj-implies-pi11-without-psp-vii}
		\item
		$((z)_{3n+1},((z)_{3n+2},(z)_{3n+3}))$ is the $\leq_{\mathsf{DJ}}$-least element in $lp(tr(\lambda,\in_{(z)_{3n+5}}))$ satisfying \eqref{theorem:weak-covering-dj-implies-pi11-without-psp-vii}.\label{theorem:weak-covering-dj-implies-pi11-without-psp-viii}
    \end{enumerate}

    Even if ${}^{\omega}({}^{\lambda}2)$ is a $\lambda$-Polish space (in fact, it is $\lambda$-Borel isomorphic to ${}^{\lambda}2$, see \cite[Theorem 5.3.10]{DMR}), we have not formally defined a lightface hierarchy on it. For this reason, we code the description above inside ${}^\lambda 2$.

    Let $z\in{}^{\omega}({}^{\lambda}2)$ and let $\varphi((z)_n)$ be a formula that contains, as a parameter, the $n$-th element of $z$ for some $n\in\omega$. Then we change $\varphi((z)_n)$ to the following $\varphi'(z)$: $\forall z'\in{}^\lambda 2 (\forall\alpha<\lambda (\exists\beta<\lambda\, \alpha=\omega\cdot\beta+n \rightarrow z'(\beta)=z(\alpha)) \rightarrow \varphi(z'))$. Note that if $\varphi$ is a $\Pi^1_1$-formula, then also $\varphi'$ is a $\Pi^1_1$-formula. Define $A$ as the set of $z\in {}^\lambda 2$ that satisfies the change of the eight formulas above.
    
    Clause \eqref{theorem:weak-covering-dj-implies-pi11-without-psp-i} is $\lambda\hyp\Pi^1_1$ by Proposition \ref{WO_is_lightface_pi_1_1}. By Lemma \ref{PM} and Remark \ref{remark:y-in-lower-part-of-x-is-pi-1-1}, clauses \eqref{theorem:weak-covering-dj-implies-pi11-without-psp-ii}, \eqref{theorem:weak-covering-dj-implies-pi11-without-psp-iv}, \eqref{theorem:weak-covering-dj-implies-pi11-without-psp-vi} and \eqref{theorem:weak-covering-dj-implies-pi11-without-psp-vii} are $\lambda\hyp\Pi^1_1$, too. Clause \eqref{theorem:weak-covering-dj-implies-pi11-without-psp-iii} holds if and only if $(y\in lp(tr(\lambda,\in_{(z)_{2}}))\wedge tr(\lambda,\in_{(z)_{2}})\vDash``y\leq_{tr(\lambda,\in_{z_{1}})}(z)_{0}"\rightarrow tr(\lambda,\in_y)\ncong tr(\lambda,\in_{(z)_0}))$, which is $\lambda\hyp\Pi^1_1$ by Lemma \ref{lemma:complexity_of_satisfaction} and Remark \ref{remark:y-in-lower-part-of-x-is-pi-1-1}, and the same goes for \eqref{theorem:weak-covering-dj-implies-pi11-without-psp-v} and \eqref{theorem:weak-covering-dj-implies-pi11-without-psp-viii}. Altogether, $A$ is (lightface) $\lambda\hyp\Pi^1_1$.

    As mentioned above, $A$ does not have the $\lambda\hyp\mathsf{PSP}$ in $K^{\mathsf{DJ}}$.
    If $(\lambda^{+})^{K^{\mathsf{DJ}}}=\lambda^{+}$, the set $A$ is of size $\lambda^{+}$ also in $V$, and doesn't have the $\lambda\hyp\mathsf{PSP}$ by an entirely analogous argument. By Lemma \ref{lemma:order-and-cantor-are-sigma-1-2}, the set $A$ is still (lightface) $\lambda\hyp\Pi^1_1$ in $V$.
\end{proof}	


If there is no inner model with a measurable cardinal, then by the Dodd-Jensen Covering Theorem, $K^{\mathsf{DJ}}$ correctly computes the successor of every uncountable singular cardinal. Thus, the following holds: 

\begin{corollary}\label{c310}
	If there is an uncountable strong limit cardinal $\lambda$ of countable cofinality such that all \emph{(lightface)} $\lambda\hyp\Pi^1_1$ subsets of ${}^{\lambda}2$ have the $\lambda\hyp\mathsf{PSP}$, then there is an inner model with a measurable cardinal.
\end{corollary}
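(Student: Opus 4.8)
The plan is to argue by contraposition, and essentially to glue together Theorem \ref{t39} with Dodd--Jensen's Covering Theorem \ref{cvthmkdj}. So I would assume that there is \emph{no} inner model with a measurable cardinal, and aim to exhibit, for the given $\lambda$, a $\lambda$-coanalytic subset of ${}^{\lambda}2$ that fails the $\lambda\hyp\mathsf{PSP}$, contradicting the hypothesis of the corollary.

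First I would feed the anti-large-cardinal assumption into Theorem \ref{cvthmkdj}: if no inner model with a measurable cardinal exists, then $K^{\mathsf{DJ}}$ has the full covering property, and in particular the weak covering property. Since $K^{\mathsf{DJ}}$ is a model of choice, weak covering means that $K^{\mathsf{DJ}}$ computes the successor of every singular cardinal correctly. Now $\lambda$ is an uncountable strong limit cardinal of countable cofinality, hence singular, so this gives $(\lambda^{+})^{K^{\mathsf{DJ}}}=\lambda^{+}$.

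At this point I would simply invoke Theorem \ref{t39} with this $\lambda$: since $(\lambda^{+})^{K^{\mathsf{DJ}}}=\lambda^{+}$, there is a $\lambda$-coanalytic subset $A$ of ${}^{\lambda}2$ without the $\lambda\hyp\mathsf{PSP}$. (Recall that the point of Theorem \ref{t39} is precisely that the set $A$ produced in $K^{\mathsf{DJ}}$ remains $\lambda\hyp\boldsymbol{\Pi}^1_1$ in $V$, by Proposition \ref{38} together with the Solovay-style absoluteness trick, and that the agreement of successors forces $|A|=\lambda^{+}$ in $V$, so the Boundedness Lemma still obstructs the $\lambda\hyp\mathsf{PSP}$.) This contradicts the assumption that every $\lambda$-coanalytic subset of ${}^{\lambda}2$ has the $\lambda\hyp\mathsf{PSP}$. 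Therefore the assumption that there is no inner model with a measurable cardinal is false, which is exactly the desired conclusion.

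I do not anticipate any real obstacle in this step: the substantial work has already been carried out, on one side in Theorem \ref{t39} (the descriptive-complexity bookkeeping for codes of Dodd--Jensen premice and mice, Lemmas \ref{SPM} and \ref{PM}, and the Solovay trick that pulls $A$ down from $\lambda\hyp\boldsymbol{\Sigma}^1_2$ to $\lambda\hyp\boldsymbol{\Pi}^1_1$), and on the other side in the Dodd--Jensen Covering Theorem. The only place deserving a moment's attention is the interface between the two: verifying that ``no inner model with a measurable cardinal'' is precisely the anti-large-cardinal hypothesis under which $K^{\mathsf{DJ}}$ enjoys covering, and that the countable cofinality of $\lambda$ makes it singular, so that the \emph{weak} covering conclusion (about successors of singular cardinals) indeed applies to it.
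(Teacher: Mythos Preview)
Your proposal is correct and follows exactly the paper's own argument: the paper also proceeds by contraposition, invoking the Dodd--Jensen Covering Theorem for $K^{\mathsf{DJ}}$ to obtain $(\lambda^{+})^{K^{\mathsf{DJ}}}=\lambda^{+}$ and then applying Theorem~\ref{t39}. There is nothing to add.
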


In particular, since all (lightface) $\lambda\hyp\Pi^1_1$ sets are $\lambda$-coanalytic (or (boldface) $\lambda\hyp\boldsymbol{\Pi}^1_1$), one gets that if $(\lambda^+)^{K^{\mathsf{DJ}}}=\lambda^+$, then there is a $\lambda$-coanalytic subset of ${}^{\lambda}2$ without the $\lambda\hyp\mathsf{PSP}$. Equivalently, if there exists an uncountable strong limit cardinal $\lambda$ with $cf(\lambda)=\omega$ such that all $\lambda$-coanalytic subsets of ${}^{\lambda}2$ have the $\lambda\hyp\mathsf{PSP}$, then there is an inner model with a measurable cardinal.

\begin{remark}
The argument used to prove Corollary~\ref{c310} doesn't work in the classical case simply because the Covering Theorem for $K^{\mathsf{DJ}}$ doesn't apply to the case $\lambda=\omega$. It is well known, moreover, that the exact consistency strength of all coanalytic sets having the $\mathsf{PSP}$ is at the existence of an inaccessible cardinal. Furthermore, one cannot adapt the previous discussion to $\lambda=\omega$ and derive that, if $\omega_1^{K^{\mathsf{DJ}}} = \omega_1$, then there exists a (lightface) $\Pi^1_1$ set of reals without the $\mathsf{PSP}$. In this situation, the obstacle is that one cannot code Dodd-Jensen mice of countable size as we have done. The set of codes $\mathsf{PM^{DJ}}(\omega)$ for Dodd-Jensen premice of countable size is defined analogously as before, and is indeed $\Pi^1_1$ as a subset of ${}^{\omega}2 \times {}^{\omega}2$. However, the situation changes when considering codes for Dodd-Jensen mice of countable size. In defining the set of codes $\mathsf{M^{DJ}}(\lambda)$, we have relied on Theorem~\ref{theorem:iterability-iff-iterability-in-a-large-enough-transitive-model}, which states that a Dodd-Jensen premouse is iterable if and only if it is iterable in a transitive structure large enough to witness its $\omega_1$-iterability. Such a structure must contain $\omega_1$ and is therefore uncountable. Thus, in the classical case one is forced to consider iterability from $V$, which is $\Pi^1_2$ (see, e.g., the proof of \cite[Theorem 20.18]{Kanamori}).
\end{remark}

\section{The $\lambda\hyp\mathsf{PSP}$ up to the existence of $0^{\dagger}$}\label{section:0-dagger}

As already mentioned, it follows from Corollary \ref{c310} and Proposition \ref{p24} that if there is a strong limit cardinal $\lambda$ of countable cofinality such that all $\lambda$-coanalytic subsets of ${}^{\lambda}2$ have the $\lambda\hyp\mathsf{PSP}$, then there is an inner model with a measurable cardinal. In this section, we lift this consistency strength lower bound to the existence of $0^\dagger$. 

If $\kappa$ is an ordinal which is measurable in some inner model we say that it is an \emph{internally measurable cardinal}. 
Let $\kappa$ be such an ordinal and let $U$ be a measure over $\kappa$ in $L[U]$. Recall that, as $L$, $L[U]$ is a model of $\mathsf{ZFC+GCH}$ which admits a definable well-ordering, which we denote by $\leq_{L[U]}$, where $x\leq_{L[U]}y$ if and only if there exists some limit ordinal $\delta$ such that $x,y\in L_{\delta}[U]$ and $L_{\delta}[U]$ satisfies the $\mathcal{L}_{\in}(\dot{U})$-sentence $``x\leq_{L[U]}y"$.
The model $L[U]$ lacks of condensation below its measurable, though. That is, while every elementary substructure of $L_\tau[U]$ with $\tau\geq\kappa$ is an initial segment of $L[U]$, this is no longer true if $M$ is an elementary substructure of $L_{\tau}[U]$ for some ordinal $\tau<\kappa$. However, in this case $M$ is, by elementarity, of the form $L_{\alpha}[U_{\alpha}]$, where $$L_{\alpha}[U_{\alpha}]\vDash``U_{\alpha}\text{ is a normal ultrafilter}".$$ Although $L_{\alpha}[U_{\alpha}]$ is not necessarily an initial segment of $L[U]$,  if $L_{\alpha}[U_{\alpha}]$ satisfies enough of $\mathsf{ZF^{-}}$, its iterated ultrapower $Ult^{(\kappa)}(L_{\alpha}[U_{\alpha}],U_{\alpha})$ is of the form $L_{\alpha'}[i_{\kappa}(U_{\alpha})]$ for some $\alpha'<\kappa^{+}$, where $i_{\kappa}$ is the corresponding elementary embedding $i_{\kappa}:L_{\alpha}[U_{\alpha}]\to Ult^{(\kappa)}(L_{\alpha}[U_{\alpha}],U_{\alpha})$. And because $i_{\kappa}(U_{\alpha})$ is generated by the closed and unbounded set of closed and unbounded sets of $\kappa$\footnote{For more details, check, e.g., \cite[Theorem 1.11]{mitchell2009beginning}.}, which belongs to $U$, it follows that $i_{\kappa}(U_{\alpha})\subseteq U$. Thus, the iterated ultrapower of $M$  \emph{is} an initial segment of $L[U]$. 

In particular, the Dodd-Jensen core model and $L[U]$ agree below $\kappa$. Together with the arguments from the previous section, this implies the following:

\begin{proposition}\label{lemma:if_kappa_below_lambda_no_coanalytic_with_the_psp}
	Assume $\lambda<\kappa$. If $(\lambda^+)^{L[U]}=\lambda^+$, then there is a \emph{(lightface)} $\lambda\hyp\Pi^1_1$ subset of ${}^{\lambda}2$ without the $\lambda\hyp\mathsf{PSP}$. In particular, there is such a set in $L[U]$. 
\end{proposition}
\begin{proof}
	By the assumption on the heigth of $\lambda$ and since the Dodd-Jensen core model and $L[U]$ agree up to $\kappa$, the (lightface) $\lambda\hyp\Pi^1_1$ subset $A$ of ${}^{\lambda}2$ without the $\lambda\hyp\mathsf{PSP}$ produced in $K^{\mathsf{DJ}}$ in the previous section  is in $L[U]$ as well, for its rank is below $\kappa$. 
    If $L[U]$ computes $\lambda^+$ correctly, the set $A$  is a (lightface) $\lambda\hyp\Pi^1_1$ subset of ${}^{\lambda}2$ without the $\lambda\hyp\mathsf{PSP}$ also in $V$. 
\end{proof}

Because of Prikry's Theorem (see, e.g., \cite[Lemma 10.6]{schindler2014set}), there is always a generic extension of $L[U]$ where $\kappa$ is still a cardinal, although of countable cofinality. That is, $L[U]$ cannot have a Covering Theorem as the one of $L$ or $K^{\mathsf{DJ}}$, and one cannot infer a result as in the previous section just from Proposition \ref{lemma:if_kappa_below_lambda_no_coanalytic_with_the_psp}. However, Dodd and Jensen showed that to get a Covering Theorem for $L[U]$ just the existence of such kind of forcing extensions is to be taken into account:

\begin{lemma}[Dodd-Jensen. See, e.g., {\cite[Theorem 35.16]{Jech}}]\label{theorem:dodd-jensen-covering-theorem-for-lu} Assume that there is an inner model with a measurable cardinal, let $\kappa$ be the least such cardinal and let $U$ be a measure on $\kappa$ in $L[U]$. Then:
	\begin{itemize}
		\item[\emph{(1)}] either $0^{\dagger}$ exists, or
		\item[\emph{(2)}] the Covering Theorem holds for $L[U]$, or
		\item[\emph{(3)}] there exists an $\omega$-sequence $\langle\kappa_{n}:n\in\omega\rangle$ Prikry generic over $L[U]$ 
		such that the Covering Theorem holds for $L[U][\langle\kappa_{n}:n\in\omega\rangle]$. 
	\end{itemize}
	Moreover, $L[U][\langle \kappa_n:n\in\omega\rangle]=L[\langle\kappa_n:n\in\omega\rangle]$.
\end{lemma}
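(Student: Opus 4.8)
\emph{Proof idea.} This is the Dodd--Jensen Covering Theorem for $L[U]$; we only indicate the shape of the argument. Assume $0^{\dagger}$ does not exist and suppose, towards alternative~(2) or~(3), that the full covering property fails for $L[U]$; fix an uncountable set of ordinals $X$ admitting no cover of size $|X|$ inside $L[U]$, chosen minimal in the usual sense (least $|X|$, then least $\sup X$ and order type). The plan is to imitate, step by step, Jensen's proof of the Covering Theorem for $L$, the one genuinely new phenomenon being that the collapsed Skolem hulls one forms now carry an internal measure and must be \emph{iterated} before they can be compared with $L[U]$; it is precisely the behaviour of these iterations that forces the dichotomy between~(2) and~(3).

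Concretely, take a regular $\tau$ far above $\sup X$, large enough that $L_{\tau}[U]$ correctly witnesses iterability of its premouse-elements (cf.\ Theorem~\ref{29}), and an elementary substructure $N \prec L_{\tau}[U]$ with $X \cup \{\kappa, U\} \subseteq N$, $|N| = |X|$, and $N$ suitably closed (e.g.\ under $\omega$-sequences). Let $\pi \colon N \to \bar N$ be the transitive collapse. Since $L_{\tau}[U] \vDash V = L[\dot U]$, Lemma~\ref{l39} gives $\bar N = L_{\gamma}[\bar U]$ with $\bar N \vDash ``\bar U$ is a normal measure on $\bar\kappa"$, where $\bar\kappa = \pi(\kappa) \le \kappa$ and $\gamma < |X|^{+}$; moreover $\bar N$ is an iterable premouse, i.e.\ a Dodd--Jensen mouse, the iterability being reflected from $L_{\tau}[U]$. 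If $\bar N$ carries no measure below $\kappa$ (for instance $\bar\kappa = \kappa$, which happens when $|X| \ge \kappa$, or $\kappa \notin N$), then $\bar N$ is an initial segment of $L$ and a cover of $X$ of size $|X|$ is produced exactly as in the covering argument for $L$, contradicting minimality; so we may assume $\bar\kappa < \kappa$ and that $\bar N = L_{\gamma}[\bar U]$ is a genuine mouse with its measure strictly below $\kappa$.

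Now comes the comparison. Because $0^{\dagger}$ does not exist and $\kappa$ is the \emph{least} internally measurable cardinal, $L[U]$ is the minimal inner model with a measurable and cannot be out-iterated, so coiterating $\bar N$ against $L[U]$ (Theorem~\ref{35}) terminates by iterating $\bar N$, via its measure and the successive images thereof, into an initial segment of $L[U]$; let $i_{0,\Omega}\colon \bar N \to M^{*} \trianglelefteq L[U]$ be the resulting iteration embedding and $\langle \kappa_{\xi} : \xi < \Omega \rangle$, $\kappa_{\xi} = i_{0,\xi}(\bar\kappa)$, its increasing critical sequence. Pulling $X$ back through $\pi^{-1}$ and pushing forward along $i_{0,\Omega}$, one extracts a cover of $X$ of size $|X|$ assembled from $M^{*}$ (an initial segment of $L[U]$) together with the critical points $\kappa_{\xi}$ that the iteration introduces. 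If this critical sequence is bounded below $\kappa$, or more generally is itself covered in $L[U]$ by a set of size $|X|$ --- equivalently, as long as no tail of it is $\omega$-cofinal in $\kappa$ --- we obtain the desired cover inside $L[U]$, and since $X$ was arbitrary this gives alternative~(2).

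The only remaining possibility is that, for some counterexample, the critical sequence contains a strictly increasing $\omega$-sequence $\langle \kappa_{n} : n < \omega \rangle$ cofinal in $\kappa$. Using the normality of the measures $i_{0,n}(\bar U)$ and the fact that $M^{*}$ is an initial segment of $L[U]$ (so that these measures cohere with $U$), one checks that for every $A \in U$ all but finitely many $\kappa_{n}$ lie in $A$, i.e.\ $\langle \kappa_{n} : n < \omega \rangle$ is $\mathbb{P}_{U}$-generic over $L[U]$; and by the minimality of $\kappa$ together with the non-existence of $0^{\dagger}$ this generic $\omega$-sequence is the unique one in $V$. One then works over $M := L[U][\langle\kappa_{n} : n < \omega\rangle]$: by Theorem~\ref{thm:mathias}, $M = L[\langle\kappa_{n}\rangle]$, and by Theorem~\ref{prikry}, $(V_{\kappa})^{M} = (V_{\kappa})^{L[U]}$ while $\kappa$ now has countable cofinality in $M$, so re-running the covering argument relative to $M$ the collapsed hulls are honest initial segments of $M$, the Prikry pathology cannot recur (the unique such sequence has been absorbed), and the argument closes off to yield the full covering property for $M$; this is alternative~(3). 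The heart of the proof is exactly the pair of claims just used --- that in the pathological case the $\omega$-cofinal critical sequence is literally Prikry-generic over $L[U]$ and is unique, and that the covering argument can be completed inside the Prikry extension $L[U][\langle\kappa_{n}\rangle]$ --- and this is where the hypotheses that $\kappa$ is \emph{least} and that $0^{\dagger}$ does not exist are really needed. Everything else (reduction to a minimal counterexample, the size bookkeeping $\gamma < |X|^{+}$, separating the part of $X$ below $\kappa$ from the part above, and extracting the cover from $M^{*}$ and the critical sequence) parallels the classical argument for $L$ and carries the bulk of the routine work.
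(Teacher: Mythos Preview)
The paper does not prove this lemma at all: it is stated with attribution to Dodd and Jensen and a citation to Jech, and then used as a black box in the proof of Theorem~\ref{t315}. There is therefore nothing in the paper to compare your sketch against.

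That said, your outline does track the standard shape of the Dodd--Jensen argument: form a Skolem hull, collapse, iterate the resulting premouse back into $L[U]$, and read off either a cover or a Prikry sequence from the critical points of the iteration. Two caveats are worth flagging. First, the real work in the Dodd--Jensen proof is fine-structural --- one needs to control exactly which functions are available in the hull to ensure the cover has the right cardinality, and this is where $\Sigma_n$-Skolem hulls, projecta, and standard parameters enter; your sketch suppresses this entirely, which is fine for a ``proof idea'' but is where most of the difficulty lies. Second, the assertion that when $\bar N$ carries no measure below $\kappa$ it is ``an initial segment of $L$'' is not quite accurate (it would rather be comparable to an initial segment of $L[U]$, and one still needs the iteration/comparison machinery even in that case). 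These are minor in the context of a sketch, but if you intend this as more than a pointer to the literature you would need to address them.
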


\begin{remark}The fact that $L[U][\langle \kappa_n:n\in\omega\rangle]=L[\langle\kappa_n:n\in\omega\rangle]$ is, according to the comment right after Theorem 35.16 in \cite{Jech}, a consequence of Mathias' Theorem, but no proof is given. We\footnote{The authors thank Andreas Lietz for his help here.} give here a justification for this fact. Since the $\supseteq$ is evident, we only have to show that $L[U][\langle\kappa_n:n\in\omega\rangle]\subseteq L[U]$. To do this, let $\kappa$ be the measurable in $L[U]$ and let 
\[
U'=\{A\subseteq\kappa:\{\kappa_n:n\in\omega\}\setminus A\text{ is finite}\}\cap L[\langle\kappa_n:n\in\omega\rangle].
\] 
Note that for every ordinal $\alpha$, $L_{\alpha}[U]=L_{\alpha}[U']$. This can be proved by transfinite induction. The first step is trivial and so is every limit case, so suppose $L_\alpha[U]=L_{\alpha}[U']$. By a density argument, we have that \[U=\{A\subseteq\kappa:\{\kappa_n:n\in\omega\}\setminus A\text{ is finite}\}\cap L[U].\] Indeed, if $x\in U$ and $\mathbb{P}$ stands for the Prikry forcing, then the set $D_{x}:=\{(t,y)\in\mathbb{P}:y\subseteq x\}$ is dense, so if $(t,y)\in G\cap D_{x}$ where $G$ is Prikry generic and $t$ is $\kappa_0{}^{\smallfrown}\ldots{}^{\smallfrown}\kappa_m$ for some $m\in\omega$, then $\kappa_k\in y\subseteq x$ for every $k>m$, thus $x$ belongs to the intersection of $\{A\subseteq\kappa:\{\kappa_n:n\in\omega\}\setminus A\text{ is finite}\}$ and $L[U]$; and a similar argument shows the opposite direction. 

Now, since by induction hypothesis $L_{\alpha}[U]\in L[\langle\kappa_n:n\in\omega\rangle]$, it follows by the definition of $U'$ that 
\[
U\cap L_{\alpha}[U]=\{A\subseteq\kappa:\{\kappa_n:n\in\omega\}\setminus A\text{ is finite}\}\cap L_{\alpha}[U]=U'\cap L_{\alpha}[U].
\]
Finally, \[
L_{\alpha+1}[U]=\mathsf{Def}(L_{\alpha}[U],\in,U\cap L_{\alpha}[U])=\mathsf{Def}(L_{\alpha}[U'],\in,U'\cap L_{\alpha}[U])=L_{\alpha+1}[U'].
\]
In particular, $L[U]=L[U']\subseteq L[\langle\kappa_n:n\in\omega\rangle]$, as $U'\in L[\langle\kappa_n:n\in\omega\rangle]$.
\end{remark}

Now, since Prikry forcing does not change the successors of singular cardinals, Dodd-Jensen's theorem implies that if $0^\dagger$ does not exist, then $L[U]$ has the weak covering property. That is:

\begin{corollary}\label{corollary:all-lightface-pi-1-1-then-0-dagger}
	If there is a strong limit cardinal $\lambda<\kappa$ of countable cofinality such that all (lightface) $\lambda\hyp\Pi^1_1$ subsets of ${}^{\lambda}2$ have the $\lambda\hyp\mathsf{PSP}$, then $0^{\dagger}$ exists. 
\end{corollary}

Suppose now that $\lambda>\kappa$. Let $x\in{}^{\lambda}2\cap L[U]$ and pick $\tau$ a limit ordinal such that $x,U\in L_\tau[U]$. The transitive closure of $Hull^{L_{\tau}[U]}(\lambda\cup\{x,U\})$ is, by elementarity, of the form $L_\alpha[U]$ for some $\alpha$, has size $\lambda$, and contains both $x$ and $U$. Then, $x\in{}^{\lambda}2\cap L[U]$ if and only if $x\in L_\alpha[U]$ for some $\lambda\leq\alpha<\lambda^+$. Observe that since $\lambda$ is a strong limit cardinal, the measure $U$ can be coded by an element in ${}^{\lambda}2$. This may not be the case if $\lambda=\kappa$, for $U$ lives in $\mathcal{P}(\mathcal{P}(\lambda))$. The case  $\lambda=\kappa$ will be addressed later on. 

Similarly as above and still under the assumption that $\lambda>\kappa$, one can prove that $x,y\in{}^{\lambda}2$ are such that $x\leq_{L[U]}y$ if and only if there exists an ordinal $\alpha\in[\lambda,\lambda^+)$ such that $x,y\in L_\alpha[U]$ and $L[U]\vDash ``x\leq_{L_\alpha[U]}y"$.

Denote by $\mathsf{C}_{L[U]}(\lambda,\kappa)$ the set of codes for such structures. That is:
\begin{equation*}
	\begin{split}
		\mathsf{C}_{L[U]}(\lambda,\kappa):=&\{((z)_1,(z)_2)\in\mathsf{BC}_{\lambda}\times{}^{\lambda}2: (\lambda,\in_{(z)_1},U_{(z)_2})\vDash``V=L[U_{(z)_2}]\wedge\\ & U_{(z)_2}\text{ is a measure on }\kappa"\}.
	\end{split}
\end{equation*}

It is easy to see that:

\begin{lemma} $\mathsf{C}_{L[U]}(\lambda,\kappa)$ is \emph{(lightface)} $\lambda\hyp\Pi^1_1(\kappa)$ in ${}^\lambda 2\times{}^\lambda 2$. 
\end{lemma}

Thus, assuming $\lambda>\kappa$, the argument from the previous section can be carried out entirely in $L[U]$ to construct a (lightface) $\lambda\hyp\Pi^1_1(\kappa)$ subset of ${}^{\lambda}2$ without the $\lambda\hyp\mathsf{PSP}$ (in particular, one can also prove, provided $\lambda>\kappa$, that both ${}^{\lambda}2\cap L[U]$ and $\leq_{L[U]}\upharpoonright({}^{\lambda}2\times{}^{\lambda}2)$ are $\lambda$-coanalytic, or, more specifically, (lightface) $\lambda\hyp\Sigma^1_2(\kappa)$). This implies that, when $\lambda>\kappa$, if $(\lambda^+)^{L[U]}=\lambda^+$, then there is a (lightface) $\lambda\hyp\Pi^1_1(\kappa)$ subset of ${}^{\lambda}2$ without the $\lambda\hyp\mathsf{PSP}$. Together with Proposition \ref{lemma:if_kappa_below_lambda_no_coanalytic_with_the_psp} and Proposition \ref{p24}, this yields the following:

\begin{proposition}\label{propoision:no-pi-1-1-in-lu} Assume $\lambda>\kappa$.
	If $(\lambda^{+})^{L[U]}=\lambda^{+}$, then there is a \emph{(lightface)} $\lambda\hyp\Pi^1_1(\kappa)$ 
    subset of ${}^{\lambda}2$ without the $\lambda\hyp\mathsf{PSP}$. In particular, there is such a set in $L[U]$.
\end{proposition}

Assume now that $\lambda=\kappa$. Note that, in this case, $L[U]$ cannot have the covering property. Now, since $\lambda$ is of countable cofinality, there exists a countable sequence $\bar{\lambda}:=\langle\lambda_n:n\in\omega\rangle$ cofinal in $\lambda$. We can assume with no loss of generality that such a sequence is Prikry on $\lambda$. If such a sequence didn't exist, $L[U]$ would have no Prikry generic extensions. Let thus $M$ be the Prikry generic extension $L[U][\bar{\lambda}]$ over $L[U]$. By Lemma \ref{theorem:dodd-jensen-covering-theorem-for-lu}, $M=L[\bar{\lambda}]$. Given $z\in{}^{\lambda}2$ a code for $\bar{\lambda}$, and proceeding as in \cite[Theorem 7.2.12]{DMR}, we can find a $\lambda$-coanalytic subset $A$ of ${}^{\lambda}2$ in $L[z]$ without the $\lambda\hyp\mathsf{PSP}$. By the previous arguments, we can conclude that, under the assumption that all $\lambda$-coanalytic subsets of ${}^{\lambda}2$ have the $\lambda\hyp\mathsf{PSP}$, $L[U][\bar{\lambda}]$ doesn't have the Covering Property. 

In summary, if $\lambda$ is an arbitrary strong limit cardinal of countable cofinality such that every $\lambda$-coanalytic subset of ${}^{\lambda}2$ has the $\lambda\hyp\mathsf{PSP}$, then neither $L[U]$ nor any Prikry extension satisfies the covering property. The Dodd--Jensen covering theorem for $L[U]$ therefore yields, together with Proposition \ref{propoision:no-pi-1-1-in-lu} and Corollary \ref{corollary:all-lightface-pi-1-1-then-0-dagger}:

\begin{theorem}\label{theorem:conditions-0-dagger-exists}
	    If there is a strong limit cardinal $\lambda$ of countable cofinality such that either 
        \begin{enumerate-(1)-r}
            \item  $\lambda<\kappa$ and all (lightface) $\lambda\hyp\Pi^1_1$ subsets of ${}^{\lambda}2$ have the $\lambda\hyp\mathsf{PSP}$, or
            \item  $\lambda>\kappa$ and all (lightface) $\lambda\hyp\Pi^1_1(\kappa)$ subsets of ${}^{\lambda}2$ have the $\lambda\hyp\mathsf{PSP}$, or
            \item  $\lambda=\kappa$ and all $\lambda$-coanalytic subsets of ${}^{\lambda}2$ have the $\lambda\hyp\mathsf{PSP}$,
        \end{enumerate-(1)-r} 
        then $0^{\dagger}$ exists. 
\end{theorem}

From which it follows: 

\begin{corollary}\label{theorem:main-theorem}
	If there is an uncountable strong limit cardinal $\lambda$ of countable cofinality such that all $\lambda$-coanalytic subsets of ${}^{\lambda}2$ have the $\lambda\hyp\mathsf{PSP}$, then $0^{\dagger}$ exists.
\end{corollary}

\section{Final remarks}\label{section:final-remarks}

Corollary \ref{c310} and Theorem \ref{theorem:conditions-0-dagger-exists} are further steps of the analysis conducted in \cite{lucke2023definability}. We summarize the three results here:
\begin{itemize}
    \item Corollary \ref{c310}: ``all \emph{(lightface)} $\lambda\hyp\Pi^1_1$ sets have the $\lambda\hyp\mathsf{PSP}$'' implies the consistency of a measurable cardinal.
    \item Theorem \ref{theorem:conditions-0-dagger-exists}(1,2): ``if $\lambda$ is not the least internally measurable cardinal, for all $a\in H_\lambda$ all \emph{(lightface)} $\lambda\hyp\Pi^1_1(a)$  subsets of ${}^{\lambda}2$ have the $\lambda\hyp\mathsf{PSP}$'' implies the existence of  $0^{\dagger}$.
    \item \cite{lucke2023definability}[Theorems 1.1 and 1.2]: ``for all $a\in H_\lambda\cup\lambda$ all \emph{(lightface)} $\lambda\hyp\Sigma^1_2(a)$  subsets of ${}^{\lambda}2$ have the $\lambda\hyp\mathsf{PSP}$''  is equiconsistent to the existence of $\omega$-many measurable cardinals.
\end{itemize}

It is still open whether the consistency lower bounds we identified in the first two cases are optimal, and whether there is a consistency upper bound to all $\lambda$-coanalytics sets having the $\lambda\hyp\mathsf{PSP}$ that is lower than all the $\lambda$-projective sets having the $\lambda\hyp\mathsf{PSP}$.

Linear iterability seems to be important for the descriptive complexity of the set of codes for mice to remain at the level of (lightface) $\lambda\hyp\Pi^1_1$ and (boldface) $\lambda\hyp\boldsymbol{\Pi}^1_1$ subsets of ${}^{\lambda}2$. This suggests that our method (same selector, sets of codes, etc.) should still work below the level of $\zerohandgrenade$ (zero hand-grenade), where \emph{almost linear iterations} occur, although with certain restrictions: it may no longer be possible to apply this method to (parameter-free) (lightface) $\lambda\hyp\Pi^1_1$ subsets of ${}^{\lambda}2$, as it has already happened at the level of $0^\dagger$. Nevertheless, one can reasonably expect that the method still applies to $\lambda$-coanalytic subsets of ${}^{\lambda}2$.

	Inner models beyond $\zerohandgrenade$ give up on linear iterability in favour of iteration trees. Considerations regarding the existence of unique cofinal branches may imply a rise in the descriptive complexity of the codes for mice. One might consider generalising Steel’s work in \cite{steel1995projectively} to optimise iterability complexity in the presence of finitely many Woodin cardinals. But this introduces new challenges: Steel eliminates extra quantifiers using the Gandy-Spector Theorem, but we lack of an equivalent result in the context of $\mathsf{GDST}$ at singular cardinals of countable cofinality, where Determinacy fails already at the very first levels of the $\lambda$-Borel hierarchy (see \cite[\textsection 4.5]{DMR}). One could still carry out the same work renouncing to remain at the lowest levels in the $\lambda$-analytical or $\lambda$-projective hierarchies, but this introduces another difficulty: inner models $M_n$ with $n$-many Woodin cardinals do not have nice covering properties. Other alternatives still at the level of core models with Woodin cardinals present additional problems beyond those mentioned. For instance, the Jensen-Steel core model without a measurable cardinal lacks the structural features, such as the well-behaved building blocks found in the Dodd-Jensen or in $L[U]$, that our techniques crucially rely on. As a result, the arguments we've employed in those settings may no longer apply in this context.

    \bibliographystyle{alpha}
    \bibliography{bibliography}

    \end{document}